\theoremstyle{plain}
\newtheorem{thm}{Theorem}[section]  
\newtheorem{lem}[thm]{Lemma}
\newtheorem{prop}[thm]{Proposition}
\newtheorem{dfn}[thm]{Definition}
\newcommand\DN{\newcommand}
\DN\lref[1]{Lemma~\ref{#1}}
\DN\tref[1]{Theorem~\ref{#1}}
\DN\pref[1]{Proposition~\ref{#1}}
\DN\sref[1]{Section~\ref{#1}}
\DN\ssref[1]{Subsection~\ref{#1}}
\DN\dref[1]{Definition~\ref{#1}}
\DN\rref[1]{Remark~\ref{#1}} 
\DN\corref[1]{Corollary~\ref{#1}}
\DN\eref[1]{Example~\ref{#1}}
\numberwithin{equation}{section}
\newcounter{Const} \setcounter{Const}{0}
\DN\Ct{\refstepcounter{Const}c_{\theConst}}
\numberwithin{Const}{section}
\DN\cref[1]{c_{\ref{#1}}}	
\DN\R{\mathbb{R}}
\DN\N{\mathbb{N}}
\DN\Q{\mathbb{Q}}
\DN\C{\mathbb{C}}
\DN\Z{\mathbb{Z}}
\DN\map[3]{#1\!:\!#2\!\to\!#3}
\DN\ot{ \otimes } 
\DN\ts{ \times }
\DN\limi[1]{\lim_{#1\to\infty}} 	
\DN\limz[1]{\lim_{#1\to0}}
\DN\limsupi[1]{\limsup_{#1\to\infty}} 	
\DN\limsupz[1]{\limsup_{#1\to0}}
\DN\liminfi[1]{\liminf_{#1\to\infty}} 	
\DN\liminfz[1]{\liminf_{#1\to0}}
\DN\sumii[1]{\sum_{#1=1}^{\infty}}
\DN\sumi[1]{\sum_{#1=0}^{\infty}}
\DN\PD[2]{\frac{\partial#1}{\partial#2}}
\DN\half{\frac{1}{2}}
\DN\Rd{\R ^d}
\DN\Zd{\Z ^d}
\DN\bs{\bigskip}
\DN\ms{\smallskip}
\DN\bigcupii[1]{\bigcup_{#1=1}^{\infty}}
\begin{document}


\DN\hatk{\hat{K}}
\DN\muk{\mu ^{\K}}

\DN\No{\N_{0}}

\DN\CX{\mathsf{Conf}(X)}
\DN\BC{\mathcal{B} }
\DN\BCX{\mathcal{B}(\CX) }

\DN\m{\lambda}

\DN\PM{\mathcal{P}_{inv} }

\DN\PS{(\Omega, \mathcal{F},\mathbb{P})}
\DN\MS{(\Omega, \mathcal{F})}
\DN\MSS{(\Omega', \mathcal{F}')}
\DN\DS{(\Omega, \mathcal{F},\mathbb{P},\mathsf{S}_G)}
\DN\DSS{(\Omega', \mathcal{F}',\mathbb{P}',{\mathsf{S}'}_G)}
\DN\DSZd{(\Omega, \mathcal{F},\mathbb{P},\mathsf{S}_{\Zd})}
\DN\DSRd{(\Omega, \mathcal{F},\mathbb{P},\mathsf{S}_{\Rd})}
\DN\DSZdn{(\Omega_n , \mathcal{F}_n ,\mathbb{P}_n ,\mathsf{S}_{n,\Zd})}
\DN\DSSZd{(\Omega', \mathcal{F}',\mathbb{P}',{\mathsf{S}'}_{\Zd})}

\DN\A{A}

\DN\T{\mathsf{T}}
\DN\TR{\T _{\X}}
\DN\TZ{\T _{\Zd}}

\DN\K{K}


\DN\PP{\mathcal{P} }
\DN\PPn{\mathcal{P}_{n} }
\DN\PPno{\mathcal{P}_{n,\mathbf{0}}}
\DN\PPnom{\mathcal{P}_{n,\mathbf{0}}^{(m)} }
\DN\PPnz{\mathcal{P}_{n,z} }
\DN\PPnzm{\mathcal{P}_{n,z}^{(m)} }
\DN\PPnzmz{\mathcal{P}_{n,z}^{(m_z)}}

\DN\QQ{\mathcal{Q} }
\DN\QQo{\mathcal{Q}_{\mathbf{0}}}
\DN\QQom{\mathcal{Q}_{\mathbf{0}}^{(m)}}
\DN\QQz{\mathcal{Q}_{z} }
\DN\QQzm{\mathcal{Q}_{z}^{(m)} }
\DN\QQzmz{\mathcal{Q}_{z}^{(m_z)} }

\DN\RRz{R_{\infty,z}}
\DN\RRN{R_{N}}
\DN\RRNzl{R_{N,(z,l)}}

\DN\RRR{\mathcal{R} }
\DN\RRRo{\RRR_{\infty,\mathbf{0}}}
\DN\RRRz{\RRR_{\infty,z}}
\DN\RRRzm{\RRR_{\infty,z}^{(m)}}
\DN\RRRzmz{\RRR_{\infty,z}^{(m_z)}}

\DN\RRRN{\RRR_{N}}
\DN\RRRNzl{\RRR_{N,(z,l)}}
\DN\RRRNzlm{\RRR_{N,(z,l)}^{(m)}}
\DN\RRRNzlmzl{\RRR_{N,(z,l)}^{(m_{z,l})}}

\DN\SSS{\mathcal{S} }
\DN\SSSN{\SSS_{N}}
\DN\SSSNz{\SSS_{N,z}}
\DN\SSSNo{\SSS_{N,\mathbf{0}}}
\DN\SSSNzl{\SSS_{N,(z,l)}}
\DN\SSSNol{\SSS_{N,(\mathbf{0},l)}}

\DN\dd{\mathsf{d}}
\DN\HH{H}
\DN\h{h}

\DN\X{\Rd}

\DN\mX{\lambda_{\X}}

\DN\XX{\mathsf{Conf}(\X)}
\DN\xx{\xi}
\DN\mXX{\mu ^{K}}
\DN\PSX{(\XX ,\mXX)}
\DN\DSXR{(\XX ,\mXX ,\TR)}
\DN\DSX{(\XX ,\mXX ,\TZ)}
\DN\DSXa{(\XX, \mXX, \T _{a \Zd})}
\DN\DSXX{(\CX,\mXX,\T_{X})}

\DN\Y{\Zd}
\DN\mY{\lambda_{\Y}}
\DN\YY{\mathsf{Conf}(\Y)}
\DN\yy{\eta}
\DN\mYY{\mu_{\Pn } ^{K}}
\DN\PSY{(\YY ,\mYY)}
\DN\DSY{(\YY ,\mYY ,\TZ)}

\DN\ONB{\Phi}
\DN\onb{\phi }

\DN\ZN{\Zd \times \N}
\DN\mZ{\lambda_{\Z}}
\DN\ZZ{\mathsf{Conf}(\ZN)}
\DN\zz{\zeta}
\DN\mZZ{\nu ^{K, \ONB}}
\DN\PSZ{(\ZZ ,\mZZ)}
\DN\DSZ{(\ZZ ,\mZZ ,\TZ)}
\DN\DSZn{(\ZZ ,\mZZ_n ,\TZ)}

\DN\mZZr{\nu _r ^{K, \ONB}}
\DN\PSZr{(\ZZ ,\mZZr)}

\DN\mZZro{\nuo _{r} ^{K,\ONB}}
\DN\mZZru{\nuu _{r} ^{K,\ONB}}

\DN\DSZr{(\ZZ ,\mZZr ,\TZ)}
\DN\DSZro{(\ZZ ,\mZZro ,\TZ)}
\DN\DSZru{(\ZZ ,\mZZru ,\TZ)}

\DN\NN{[N]}
\DN\W{\Zd \times \NN}
\DN\mW{\lambda_{\W}}
\DN\KK{K^{\ONB}}
\DN\KKr{\KK _{r}}
\DN\WW{ \{0,1\}^{\Zd \times [N]}}
\DN\ww{\omega}
\DN\mWW{\nu_{N} ^{K,\ONB }}
\DN\PSW{(\WW ,\mWW)}
\DN\DSW{(\WW ,\mWW ,\TZ)}
\DN\DSWN{(\WW ,\mWW ,\TZ)}

\DN\mWWr{\nu_{r,N} ^{K,\ONB}}
\DN\PSWr{(\WW ,\mWWr)}
\DN\DSWr{(\WW , \mWWr , \TZ)}

\DN\IP{1^P}
\DN\IQ{1^Q}
\DN\IPQ{1^{P \cup Q}}

\DN\dbar{\bar{d}}

\DN\Pn{P_n}
\DN\Pno{P_{n,\mathbf{0}}}
\DN\Pnz{P_{n,z}}
\DN\projn{\Pi _{\Pn}}

\DN\QN{Q^N}
\DN\QQN{Q_{z,l}^{N}}
\DN\proj{\varpi }
\DN\projN{\proj _N}
\DN\projNN{\proj _N ^{\prime}}


\DN\s{\{0,1\}^{\Zd} }

\DN\alg{\mathcal{B} }
\DN\algN{\mathcal{B}_N }

\DN\Ku{\underline{K}}
\DN\Ko{\overline{K}}

\DN\KKu{\underline{K }^{\ONB }}
\DN\KKo{\overline{K }^{\ONB }}

\DN\ok{\overline{\K}}
\DN\uk{\underline{\K}}

\DN\on{\overline{\nu}}
\DN\un{\underline{\nu}}

\DN\nuo{\on }
\DN\nuu{\un }
\DN\mWWro{\nuo _{r,N} ^{K,\ONB}}
\DN\mWWru{\nuu _{r,N} ^{K,\ONB}}

\DN\ol{\overline{\lambda}}
\DN\ul{\underline{\lambda}}
\DN\wh{\hat{w}(t)}
\DN\whr{\hat{w}_r (t)}
\DN\kh{\hat{k}(t)}
\DN\khr{\hat{k}_r (t)}
\DN\kr{k_r (x)}
\DN\mm{\gamma _N}
\DN\mcN{\mc _N}
\DN\mc{\gamma }

\DN\CPL{\Gamma}
\DN\cpl{\gamma}

\DN\dg{\mathsf{d}}
\DN\rr{r_0}

\DN\KKros{}
\DN\KKrus{}


\title{
Isomorphisms between 
determinantal point processes with translation invariant kernels 
and 
Poisson point processes
}
\author{Shota \textsc{Osada}$ $ \\ {\small s-osada@math.kyushu-u.ac.jp}
}

\maketitle
\abstract{
We prove the Bernoulli property for determinantal point processes on $ \Rd $ with translation-invariant kernels. 
For the determinantal point processes on $ \Zd$ with translation-invariant kernels, the Bernoulli property was proved by Lyons and Steif \cite{RL-St.03} and Shirai and Takahashi \cite{s-t.aop}. 
As its continuum version, we prove an isomorphism between the translation-invariant determinantal point processes on $ \Rd$ with translation-invariant kernels and homogeneous Poisson point processes. 
For this purpose, we also prove the Bernoulli property for the tree representations of the determinantal point processes. 
}






\DN\proofbegin{\smallskip \noindent {\it Proof. }}
\DN\proofend{\qed \smallskip }

\section{Introduction and the main result}\label{s:1} 

We consider an isomorphism problem of measure-preserving dynamical systems among translation-invariant point processes on $ \Rd$ such as 
homogeneous Poisson point processes and determinantal point processes with translation-invariant kernel functions. 

The homogeneous Poisson point process is the point process such that 
numbers of particles on disjoint subsets follow independent Poisson distributions. 
The homogeneous Poisson point process is parameterized using the intensity $ r > 0$. 
From the general theory of Ornstein and Weiss \cite{o-w.amenable}, 
homogeneous Poisson point processes are isomorphic to each other regardless of the value of $ r$. 

The determinantal point process is the point process such that the determinants of a kernel function give its correlation functions. 
The determinantal point process describes a repulsive particle system 
and appears in various objects such as 
uniform spanning trees, the zeros of a hyperbolic Gaussian analytic function with a Bergman kernel, and the eigenvalue distribution of random matrices. 

These two classes of point processes have different properties in correlations among particles. 
For example, determinantal point processes have negative associations \cite{RL.03}. 
The sine point process is a typical example of a translation-invariant determinantal point process, that has number rigidity \cite{Bu-S.17}. 
On the other hand, Poisson point processes do not have this property because the particles are regionally independent. 
Nevertheless, we prove they are isomorphic to each other. 

We start by recalling the isomorphism theory. 

An automorphism $ \mathsf{S}$ of a probability space $ \PS$ is a bi-measurable bijection such that $ \mathbb{P} \circ \mathsf{S}^{-1} = \mathbb{P} $. 
Let $ \mathsf{S}_{G}=\{\mathsf{S}_g : g \in G \}$ be a group of automorphisms of $ \PS$ parametrized by a group $ G$. 
A measure-preserving dynamical system of $ G$-action is the quadruple $ \DS$. 
We call $ \DS $ the $ G$-action system for short.

Let $ \DS$ and $ \DSS$ be $ G$-action systems. 
A factor map is a measurable map $ \phi : \Omega \rightarrow \Omega'$ such that \begin{align*}& 
\mathbb{P}\circ \phi^{-1}=\mathbb{P}', 
\quad \phi \circ \mathsf{S}_g (x)= {\mathsf{S}'}_g \circ \phi (x) \text{ for each } g \in G \text{ and a.s. } x \in \Omega
.\end{align*} 
In this case, 
we call $ \DSS$ the $ \phi$-factor of $ \DS$ or simply a factor of $ \DS $. 
An isomorphism is a bi-measurable bijection $ \phi : \Omega \rightarrow \Omega'$ such that both $ \phi $ and $ \phi ^{-1}$ are factor maps. 
If there exists an isomorphism $ \phi: \Omega \rightarrow \Omega' $, 
then $ \DS$ and $ \DSS$ are said to be isomorphic. 

Let $ \DS$ be a $ G$-action system with a measurable map $ \phi$ from $ \MS$ to $ \MSS$. 
Then 
$ (\Omega' ,\mathcal{F}_{\phi}, \mathbb{P}_{\phi}, \mathsf{S}_{G}^{\phi})$ 
is a $ G$-action system. 
Here, $ (\Omega' ,\mathcal{F}_{\phi}, \mathbb{P}_{\phi}) $ is the completion of $ (\Omega^{\prime}, {\sigma[\phi]}, {\mathbb{P}\circ \phi ^{-1}}) $, and $ \mathsf{S}_{G}^{\phi} =\{ \phi \circ \mathsf{S}_{g} \circ \phi ^{-1} : g \in G \}$. 
We also call the $ G$-action system 
$(\Omega' , \mathcal{F}_{\phi}, \mathbb{P}_{\phi}, \mathsf{S}_{G}^{\phi} )$
the $\phi$-factor of $ \DS$.

A typical system with discrete group action is a Bernoulli shift. 
A $ G$-action Bernoulli shift is a system formed from the direct product of a probability space over $ G $ and the canonical shift. 
Ornstein \cite{Orns.1970a,Orns.1970b} proved that the $ \Z $-action Bernoulli shifts with equal entropy are isomorphic to each other. 
We call a system $ \DS$ Bernoulli 
if $ \DS$ is isomorphic to a Bernoulli shift. 
Ornstein and Weiss \cite{o-w.amenable} extended the isomorphism theory to amenable group actions. 
As a consequence of the general theory, all the homogeneous Poisson point processes on $ \Rd $ are isomorphic to each other regardless of their intensity.

Let $ X $ be a locally compact Hausdorff space with countable basis. 
We denote by $ \CX$ the set of all nonnegative integer-valued Radon measures on $ X$. 
We equip $ \CX $ with the vague topology, under which $ \CX $ is a Polish space. 
We call a Borel probability measure $ \mu $ on $ \CX $ a point process on $ X$. We say $ \mu $ is simple when $ \mu (\xi(\{x\})\geq2)=0$ for each $ x \in X$. 

Let $ \mu$ be a point process on $ X$. 
Throughout this paper, we write the completion of $\mu$ by the same symbol. 
We also write $ (\CX, \mu , \T _G) $ as the $ G$-action system 
made of the completion of $ (\CX, \mathcal{B}(\CX),\mu )$ and 
a $ G$-action group of automorphisms $ \T _G$. 

A homogeneous Poisson point process with intensity $ r >0$ is the point process on $ \Rd$ satisfying: \\
\thetag{1} $ \xi(A) $ has a Poisson distribution with mean $ r |A| $ for each $ A \in \mathcal{B}(\Rd ) $. 

\noindent \thetag{2}
$ \xi(A_1),\ldots,\xi(A_{k}) $ are independent for any disjoint subsets $ A_1,\ldots,A_k \in \mathcal{B}(\Rd ) $. 
Here, $ \xi(A) $ is the number of particles on $ A $ for $ \xi \in \CX$ and $ |A| $ is the Lebesgue measure of $A$.

A determinantal point process $ \mu $ on $ X $ is a point process associated with a kernel function $ \K: X \times X \rightarrow \C $ and a Radon measure $ \m $ on $ X $, for which the $ n$-point correlation function with respect to $ \m $ is given by 
\begin{align}\label{:11a}&
\rho _n (x_1,\ldots,x_n)= \det [ K(x_i,x_j) ]_{i,j=1}^{n}
\end{align}
for each $ n \in \N $. 
See \dref{:dfn2} for the definition of the $ n$-point correlation function. 
We call $ \mu $ a $ (\K ,\m )$-determinantal point process. 
If the context is clear, we omit $ \m $ calling $ \mu$ a $ \K$-determinantal point process. 
Throughout this paper, we assume that $ \m$ is the Lebesgue measure if $ X=\Rd $. 

Now, we state the main theorem: 
\begin{thm} \label{l:11}
Let $ \hatk \in L^1 (\Rd)$ such that $ \hatk (t) \in [0,1]$ for a.e.\ $ t \in \Rd $. 
Let $ \mXX $ be a determinantal point process on $ \Rd$ with translation-invariant kernel $\K $ 
such that
\begin{align}\label{:11z}&
\K (x,y)=\int_{\Rd} \hatk (t) e^{2\pi i (x-y) \cdot t}dt
.\end{align}
Then $ \DSXR$ is isomorphic to a Poisson point process. 
Here, $ \T_{a} : \sum_{i \in \N}\delta_{x_{i}} \mapsto \sum _{i \in \N} \delta_{x_{i}+a} $ for $ a \in \Rd$ and $ \T_{\Rd}=\{\T_{a}: a \in \Rd \}$.
\end{thm}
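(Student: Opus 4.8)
The plan is to transport the continuum problem to the discrete setting of Lyons--Steif \cite{RL-St.03} and Shirai--Takahashi \cite{s-t.aop}, prove the Bernoulli property there, carry it back to $\XX$, and finally upgrade the $\Zd$-action to the full $\Rd$-flow. Since point processes on $\Rd$ have infinite entropy (positions are diffuse), once I know that $\DSXR$ is a Bernoulli $\Rd$-flow, the Ornstein--Weiss isomorphism theorem for amenable group actions \cite{o-w.amenable} identifies it with the Poisson point process, whose intensity is irrelevant. So the entire weight of the argument is to establish Bernoullicity.

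First I would fix the unit-cube partition $\Rd=\bigcup_{n\in\Zd}(n+[0,1)^d)$ and an orthonormal basis $\ONB=\{\onb_j\}_{j\in\N}$ of $L^2([0,1)^d)$, and build from $\mXX$ the \emph{tree representation}: a $\Zd$-translation-invariant determinantal point process $\mZZ$ on $\ZN$ whose kernel $\KK$ is the matrix of the integral operator $\K$ in the moving basis $\{\onb_j(\cdot-n)\}_{(n,j)}$. The internal index $\N$ records, cube by cube, the successive conditional (tree-structured) choices that reconstruct the within-cube configuration. Because $\hatk\in[0,1]$, the operator $\K$ has spectrum in $[0,1]$ and is a mixture of determinantal projection processes, which is exactly what makes the coding determinantal. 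The key structural claim is that $\DSX$ and $\DSZ$ are isomorphic as $\Zd$-action systems, the randomization of the exact particle positions inside each cube being encoded faithfully by $\mZZ$. Constructing this intertwining isomorphism, and verifying that $\KK$ inherits both the determinantal form and the $\Zd$-invariance, is the first substantial step.

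Next I would truncate. For each $N$ I keep only the first $N$ basis functions per cube, obtaining the process $\mWW$ on $\WW$, which is a $\Zd$-translation-invariant determinantal point process on the discrete homogeneous space $\Zd\times[N]$ with kernel the $[N]$-truncation of $\KK$, whose eigenvalues lie in $[0,1]$. By \cite{RL-St.03} and \cite{s-t.aop}, each $\DSW$ is Bernoulli. I would then let $N\to\infty$: the processes $\mWW$ converge to $\mZZ$, and I would control this convergence in Ornstein's $\dbar$ metric, using that the omitted high-index modes contribute a vanishing expected number of discrepant coordinates per unit cell. Since the class of Bernoulli systems is closed under $\dbar$-limits, $\DSZ$ is Bernoulli, and by the isomorphism of the previous step so is $\DSX$. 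This $\dbar$-control as the alphabet grows to the infinite internal index $\N$ is, together with the construction of the isomorphism, the main obstacle: one must show that truncating the tree after $N$ levels alters the law by a configuration-wise amount that is small in $\dbar$, uniformly in the $\Zd$-structure.

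Finally I would pass from the lattice action to the flow. Rescaling the kernel preserves its form and the hypothesis $\hatk\in[0,1]\cap L^1$, so the same argument shows that each lattice action $\DSXa$ is Bernoulli for every $a>0$. Combining this with the amenability of $\Rd$ and the Ornstein--Weiss machinery, in which finite determination is equivalent to Bernoullicity and Bernoullicity of the cocompact lattice subactions upgrades to Bernoullicity of the $\Rd$-flow, I conclude that $\DSXR$ is a Bernoulli $\Rd$-flow. Having infinite entropy, it is therefore isomorphic to the homogeneous Poisson point process, which completes the proof.
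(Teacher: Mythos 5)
Your overall architecture (tree representation, Bernoullicity on the discrete side, increasing refinements, then the Ornstein--Weiss upgrade from a $\Zd$-subaction of infinite entropy to the Poisson $\Rd$-flow) matches the paper's outline, but the step that carries all the weight is missing. You dispose of the Bernoullicity of the truncated tree representation by citing \cite{RL-St.03} and \cite{s-t.aop}; those results are for determinantal processes on $\Zd$ with scalar translation-invariant kernels, whereas your truncated process lives on $\Zd\times[N]$ with a kernel that is only block-translation-invariant in the first coordinate, so the citation does not apply as stated. The paper's actual mechanism is entirely different: it smooths $\hatk$ by convolution with the Fej\'er-type kernel $\hat w_r$, so that the resulting kernel $K_r(x,y)$ is supported in $\{|x_j-y_j|<r\}$; the associated tree representation is then \emph{finitely dependent}, hence very weak Bernoulli, hence Bernoulli. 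To transfer this to the true kernel $K$ one needs the sandwich $\Ku_r\le K,K_r\le\Ko_r$ (taking pointwise min/max of $\hatk$ and $\hatk_r$), Lyons' stochastic domination for determinantal processes to produce monotone couplings, and a computation showing that the one-point functions of $\mZZro$ and $\mZZru$ converge, which bounds $\dbar(\mWW,\mWWr)$. None of this finite-range/monotone-coupling machinery appears in your proposal, and without it (or a genuine block-kernel extension of Lyons--Steif, which you would have to prove) the discrete Bernoullicity is unsupported.

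Two further points would fail as written. First, you assert that $\DSX$ and $\DSZ$ are isomorphic, with the tree representation "encoding faithfully" the positions inside each cube; the tree representation of \cite{o-o.tail} only reproduces the law of the cell counts $\xi(P_z)$ (Lemma \ref{l:4x2}), so $\DSY$ is a \emph{factor} of $\DSZ$, not a copy of $\DSX$. The paper recovers $\DSX$ by running the construction over a refining sequence of partitions $\Pn$ and invoking the increasing-$\sigma$-field theorem (Lemma \ref{l:41}). Second, your plan to let $N\to\infty$ "in the $\dbar$ metric" does not make sense across the growing alphabets $[2^N]^{\Zd}$: $\dbar$ is a metric on $\PM(M)$ for fixed $M$, and the paper uses it only for the $r\to\infty$ limit at fixed $N$; the passage $N\to\infty$ is again handled by Lemma \ref{l:41}. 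Your final step (infinite entropy plus $\Zd$-Bernoullicity implies isomorphism of the $\Rd$-flow with a Poisson point process) is correct and is exactly Lemma \ref{l:54}.
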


\noindent

We remark that 
the assumption for $ \K $ in \tref{l:11} implies the following condition \thetag{1}--\thetag{4} with $ X=\Rd $ and the Lebesgue measure $ \m $. \\
\thetag{1} $ \K : X \times X \rightarrow \C$ is Hermitian symmetric. \\
\thetag{2} For each compact set $ A \subset X$, the integral operator $ \K $ on $ L^2 (A,\m)$ is of trace class. \\
\thetag{3} $ \mathsf{Spec} \K \subset [0,1]$. \\
\thetag{4} $ \K (x,y)= \K(x-y,0)$. \\
Under assumptions \thetag{1}--\thetag{3}, there exists a unique $ (\K , \m )$-determinantal point process $ \mu $ with the kernel function $ \K$ \cite{sosh,s-t.jfa}.

The $ \K $-determinantal point process $ \mu $ is translation-invariant because its $ n$-correlation functions are translation-invariant. 


For determinantal point processes on $ \Zd $ with translation-invariant kernel and the counting measure, 
Lyons and Steif \cite{RL-St.03} and Shirai and Takahashi \cite{s-t.aop} independently proved the Bernoulli property, 
the latter giving a sufficient condition of the weak Bernoulli property under the assumption $ \K : \Zd \times \Zd \rightarrow \C $ satisfying \thetag{1}, \thetag{2}, $\mathsf{Spec}(\K) \subset (0,1)$, and \thetag{4}. 
We recall that the weak Bernoulli property is stronger than the Bernoulli property. 
Lyons and Steif \cite{RL-St.03} proved the Bernoulli property for the case 
$ \K $ satisfying \thetag{1}--\thetag{4}. 
\tref{l:11} is its continuum version. 

One of the ideas in \cite{RL-St.03} is using the dbar distance, 
which is a metric on the set of $ \Zd$-action systems and the Bernoulli property is closed under the metric \cite{Ornstein-74,o-w.amenable,steif.91}. 
However, the dbar distance does not work for systems with infinite entropy because entropy is continuous with respect to the dbar distance. 
In general, a translation-invariant point process on $ \Rd$ has infinite entropy. 
Therefore, we cannot apply the dbar distance to our case. 
Therefore, we construct point processes on a discrete set 
that approximate the determinantal point process on $ \Rd$. 
We prove the Bernoulli property of the discrete point processes. 
In turn, we can prove the isomorphism of the determinantal point process on $ \Rd$ and the Poisson point process via the tree representation \cite{o-o.tail}.

To prove \tref{l:11}, we apply the general theory given by Ornstein and Weiss \cite{o-w.amenable}. 
We quote them in the form applicable to the $ \Rd $- and $ \Zd $-actions. 
We also refer to \cite{Ornstein-74} for the $ \Z $- and $ \R $-actions, and \cite{steif.91} for the $ \Zd $-action.

The outline of this paper is as follows. 
In \sref{s:x}, we collect notions related to the Bernoulli property. 
In \sref{s:2}, we introduce the kernel functions that approximate the determinantal kernel $ \K $ in \tref{l:11} uniformly on any compact sets on $ \Rd$. 
In \sref{s:3}, we introduce the tree representations of the determinantal point processes on $ \Rd $. 
We combine these representations with the kernels introduced in \sref{s:2}. 
The tree representations are determinantal point processes on $ \Zd \times \N$ and are translation-invariant with respect to the first coordinate. 
In \sref{s:4}, we prove the Bernoulli property of the tree representation using the properties of the dbar distance introduces in \sref{s:x}. 
In \sref{s:5}, we prove \tref{l:11} using the Bernoulli property of the tree representations.

\section{Notions related to the Bernoulli property}\label{s:x}

In this section, we collect properties of point processes without determinantal structure and notions related to the Bernoulli property.

We first recall the notion of the monotone coupling. 
For $ \zeta ^i =\{ \zeta _{z}^i \}_{z \in \Zd} \in \{ 0,1\}^{\Zd}$ ($ i=1,2$), 
we write $ \zeta ^1 \leq \zeta^2 $ if $\zeta _z^1 \leq \zeta _z^2 $ for each $ z \in \Zd$. 
We equip $ \{0,1\}^{\Zd}$ with the product topology. 
We call a continuous function $f : \{ 0,1\}^{\Zd} \rightarrow \R $ a monotone function on $ \{ 0,1\}^{\Zd} $ if $ \zeta^1 \leq \zeta^2 \Rightarrow f(\zeta^1) \leq f(\zeta^2)$. 
Let $ \alg $ be the Borel $ \sigma$-field of $ \{0,1\}^{\Zd} $. 
For probability measures $ \mu $ and $ \nu $ on $ (\{ 0,1 \}^{\Zd}, \alg)$, 
we write $ \mu \leq \nu $ if for each monotone function $ f $,
\begin{align}&\notag 
\int _{\s }f d\mu \leq \int _{\s }f d\nu 
.\end{align}
Let $ \nu _1 $ and $ \nu _2$ be probability measures on $ \{0,1 \}^{\Zd}$. 
We say a probability measure $ \cpl $ on $ \{0,1 \}^{\Zd} \times \{ 0,1 \}^{\Zd}$ is a monotone coupling of $ \nu _1 $ and $ \nu _2$ if the following hold:\\ 
\thetag{1} $\cpl (A \times \s )=\nu _1 (A) \text{ for }A \in \alg$. \\
\thetag{2} $\cpl (\s \times A)=\nu _2 (A) \text{ for }A \in \alg$. \\
\thetag{3} $\cpl (\{(\zeta ^1 , \zeta ^2) \in \s \times \s ; \zeta ^1 \leq \zeta ^2 \})=1$. 

\begin{lem}[e.g.\,\cite{Liggett-85}]\label{l:42}
For probability measures $ \mu $ and $ \nu $ on $ \{ 0,1 \}^{\Zd}$, the following statements are equivalent: \\
\thetag{1} $ \mu \leq \nu $.\\ 
\thetag{2} There exists a monotone coupling of $ \mu$ and $ \nu$. 
\end{lem}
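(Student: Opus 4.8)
The implication \thetag{2} $\Rightarrow$ \thetag{1} is immediate: if $\cpl$ is a monotone coupling of $\mu$ and $\nu$, then for any monotone function $f$ the support condition \thetag{3} gives $f(\zeta^1)\le f(\zeta^2)$ for $\cpl$-a.e.\ $(\zeta^1,\zeta^2)$, and integrating against $\cpl$ together with \thetag{1} and \thetag{2} yields $\int_{\s} f\, d\mu=\int f(\zeta^1)\, d\cpl \le \int f(\zeta^2)\, d\cpl=\int_{\s} f\, d\nu$. Hence the content is the converse \thetag{1} $\Rightarrow$ \thetag{2}, which is Strassen's theorem for the compact partially ordered space $\s$. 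The plan is to establish it first on finite boxes by a combinatorial marriage argument, and then to pass to the limit by compactness.

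For the finite step, I would fix an increasing sequence of finite boxes $\Lambda_n \uparrow \Zd$ and let $\pi_n : \s \to \{0,1\}^{\Lambda_n}$ be the coordinate projection. Since a monotone function on the finite poset $\{0,1\}^{\Lambda_n}$ pulls back through $\pi_n$ to a monotone continuous function on $\s$, the hypothesis $\mu\le\nu$ descends to $\mu\circ\pi_n^{-1}\le\nu\circ\pi_n^{-1}$. On $\{0,1\}^{\Lambda_n}$ I would then set up the transportation problem whose sources carry mass $\mu\circ\pi_n^{-1}$, whose sinks carry mass $\nu\circ\pi_n^{-1}$, and whose admissible edges join $\sigma$ to $\tau$ exactly when $\sigma\le\tau$; a feasible transport is precisely a monotone coupling $\cpl_n$. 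By the feasibility form of the max-flow--min-cut theorem (equivalently Hall's marriage theorem), such a transport exists if and only if every source set $S$ routes into its up-closure, i.e.\ $\mu\circ\pi_n^{-1}(S)\le\nu\circ\pi_n^{-1}(\uparrow\! S)$. Because $\uparrow\! S$ is an increasing set, applying $\mu\circ\pi_n^{-1}\le\nu\circ\pi_n^{-1}$ to the monotone indicator $\mathbf{1}_{\uparrow S}$ gives $\mu\circ\pi_n^{-1}(S)\le\mu\circ\pi_n^{-1}(\uparrow\! S)\le\nu\circ\pi_n^{-1}(\uparrow\! S)$, so Hall's condition holds and $\cpl_n$ exists.

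To pass to the limit, I would lift each $\cpl_n$ to a probability measure $\tilde\cpl_n$ on $\s\times\s$ by sampling $(\sigma,\tau)$ from $\cpl_n$ and then completing the two configurations independently from $\mu$ and $\nu$ conditioned on $\{\pi_n=\sigma\}$ and $\{\pi_n=\tau\}$ respectively; thus $\tilde\cpl_n$ has marginals $\mu$ and $\nu$. As $\s\times\s$ is compact, Prokhorov's theorem yields a weakly convergent subsequence $\tilde\cpl_n\to\cpl$. The marginals pass to the limit because the coordinate projections are continuous, giving \thetag{1} and \thetag{2}. For the order constraint \thetag{3}, fix $z\in\Zd$; the event $E_z=\{\zeta^1_z\le\zeta^2_z\}$ is clopen, and $\tilde\cpl_n(E_z)=1$ once $z\in\Lambda_n$, since then the $\Lambda_n$-marginal of $\tilde\cpl_n$ is the ordered measure $\cpl_n$. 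By the portmanteau theorem for the closed set $E_z$, $\cpl(E_z)\ge\limsup_n\tilde\cpl_n(E_z)=1$, and intersecting over the countably many $z$ gives $\cpl\big(\{\zeta^1\le\zeta^2\}\big)=1$.

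The main obstacle is the finite combinatorial core: recognizing that stochastic domination is exactly the Hall/max-flow feasibility condition for the order-constrained transport, and then verifying that this order is preserved under the weak limit. The decisive point for the limit is that each coordinatewise comparison event $E_z$ is clopen, so the limiting measure retains full mass on it; without this clopen structure the closed-set half of portmanteau would only give an inequality in the wrong direction.
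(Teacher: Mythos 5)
Your proposal is correct. Note, however, that the paper offers no proof of this lemma at all: it is quoted as a known result with the citation to Liggett's book (it is Strassen's theorem for the compact partially ordered space $\{0,1\}^{\Zd}$), so there is no in-paper argument to compare against. What you have written is essentially the classical proof: the easy direction by integrating a monotone function against the coupling; the finite-box case via the Gale--Hoffman/Hall feasibility criterion for the order-constrained transportation problem, where stochastic domination applied to the increasing indicator $\mathbf{1}_{\uparrow S}$ verifies the cut condition; and the passage to $\Zd$ by lifting the finite couplings to $\s\times\s$, extracting a weak limit by compactness, and using that each event $\{\zeta^1_z\le\zeta^2_z\}$ is clopen so the limit retains full mass on the intersection. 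All the steps check out, including the two points that are easiest to fumble: the marginals of the lifted couplings are exactly $\mu$ and $\nu$ by construction, and the closed-set half of the portmanteau theorem gives the inequality in the direction you need. This is a sound, self-contained substitute for the citation.
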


We naturally regard a simple point process $ \mu $ on $ \ZN $ as a probability measure on $ \{0,1\}^{\ZN }$, denoted by the same symbol $ \mu $. 
We write $ \mu \leq \nu $ for simple point processes $ \mu $ and $ \nu $ if the corresponding probability measures on $ \{0,1\}^{\ZN }$ satisfy $ \mu \leq \nu $. 
We introduce the notion of monotone coupling for simple point processes on $ \ZN $ from that of the corresponding probability measures on $ \{0,1\}^{\ZN }$ in an obvious fashion.

Fix $ N \in \N$. 
We set $ \NN =\{ 1,\ldots,N \}$. 
Let $ \QN = \{ \QQN : (z,l) \in \W \}$ be a partition of $ \ZN $ such that 

\begin{align}\label{:33h}
\QQN =
\begin{cases}
\{(z,l)\} & \text{ for } l \in [N-1]
\\
\{(z,m) \in \ZN ; m \geq l \} & \text{ for } l=N
\end{cases}
\end{align}
for each $ (z ,l) \in \W $. 
For $ \xi \in \ZZ $, we set 
\begin{align}\notag &
\omega_{z,l}^{N}(\xi) =
1_{\{ \xi ( \QQN ) \geq 1 \} }
.\end{align}
Let $ \projN : \ZZ \rightarrow \WW $ denote the map
\begin{align}\label{:33j}&
\xi 
\mapsto 
\{ \omega_ {z,l}^{N}(\xi) \}_{(z,l) \in \Zd \times \NN }
.
\end{align}
We denote the image measure $ \nu \circ \projN ^{-1} $ by $ \nu_N $ for a point process $ \nu$ on $ \ZN $.

\begin{prop}\label{l:x31}
Let $ \mu $ and $ \nu$ be simple point processes on $ \ZN$. 
Assume $ \mu \leq \nu$. 
Then $ \mu_N \leq \nu_N$. 
\end{prop}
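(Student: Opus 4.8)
The plan is to lift the stochastic domination $\mu \le \nu$ to a genuine coupling, transport that coupling through the map $\projN$, and then read off the domination of the images from \lref{l:42}. The advantage of working entirely at the level of couplings is that it avoids any appeal to continuity of $\projN$, which in fact fails. Concretely, I would first apply \lref{l:42} with the countable index set $\ZN$ in place of $\Zd$: since $\mu \le \nu$ as measures on $\{0,1\}^{\ZN}$, there is a monotone coupling $\cpl$ of $\mu$ and $\nu$, that is, a probability measure on $\{0,1\}^{\ZN} \times \{0,1\}^{\ZN}$ whose first and second marginals are $\mu$ and $\nu$ and which satisfies $\cpl(\{(\zz^1,\zz^2) : \zz^1 \le \zz^2\}) = 1$.

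The key step is to verify that $\projN$ is measurable and monotone. Measurability is immediate, since each coordinate $\omega_{z,l}^N(\xi) = 1_{\{\xi(\QQN) \ge 1\}}$ is a measurable function of $\xi$. For monotonicity, let $\zz^1 \le \zz^2$ in $\{0,1\}^{\ZN}$. Each block $\QQN$ of the partition $\QN$ is fixed, so $\zz^1(\QQN) \le \zz^2(\QQN)$ and hence the indicator $\omega_{z,l}^N$ cannot decrease; therefore $\projN(\zz^1) \le \projN(\zz^2)$ coordinatewise in $\WW$. Note that this uses only that the blocks are fixed, so the cases $l \in [N-1]$ and $l = N$ are handled uniformly.

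I would then push $\cpl$ forward by $\projN \times \projN$ to obtain $\cpl_N := \cpl \circ (\projN \times \projN)^{-1}$, a probability measure on $\WW \times \WW$. Since $\cpl(\projN^{-1}(A) \times \{0,1\}^{\ZN}) = \mu(\projN^{-1}(A)) = \mu_N(A)$ and similarly for the second coordinate, the marginals of $\cpl_N$ are exactly $\mu_N$ and $\nu_N$. Moreover monotonicity of $\projN$ gives $\{(\zz^1,\zz^2) : \zz^1 \le \zz^2\} \subseteq (\projN \times \projN)^{-1}(\{(\ww^1,\ww^2) : \ww^1 \le \ww^2\})$, so $\cpl_N$ is concentrated on $\{\ww^1 \le \ww^2\}$. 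Thus $\cpl_N$ is a monotone coupling of $\mu_N$ and $\nu_N$, and the implication \thetag{2} $\Rightarrow$ \thetag{1} of \lref{l:42} yields $\mu_N \le \nu_N$.

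The main obstacle, such as it is, is conceptual rather than technical: one must resist the tempting direct route of testing $\mu_N \le \nu_N$ against monotone functions $g$ on $\WW$ through the identity $\int g\, d\mu_N = \int g \circ \projN\, d\mu$. This is blocked because $g \circ \projN$ need not be an admissible (continuous) monotone test function: for the block $\QQN$ with $l = N$, a configuration with a single point at $(z,m)$ tends vaguely to the empty configuration as $m \to \infty$, yet $\omega_{z,N}^N$ stays equal to $1$, so $\projN$ is not vaguely continuous. The coupling argument above circumvents this entirely, and the only point requiring a word of justification is that \lref{l:42} (Strassen's theorem) applies verbatim to $\{0,1\}^{\ZN}$ and $\{0,1\}^{\W}$, these being countable products of $\{0,1\}$.
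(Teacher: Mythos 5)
Your argument is correct and is essentially the paper's own proof: both take the monotone coupling of $\mu$ and $\nu$ furnished by \lref{l:42}, push it forward through $\projN \times \projN$, verify the marginals and the concentration on $\{\zeta \leq \omega\}$, and conclude by the converse direction of \lref{l:42}. Your explicit verification that $\projN$ is order-preserving (and your remark on why the test-function route fails) merely spells out steps the paper leaves implicit.
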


\begin{proof}
By assumption and \lref{l:42}, there exists a monotone coupling $ \mc $ of $ \mu $ and $ \nu $. 
Let $ \mcN (\xi,\eta) = \mc \circ (\projN (\xi) , \projN (\eta) )^{ -1} $. 
Then for $ A \in \mathcal{B}(\WW) $, 
\begin{align}\notag 
\mcN (A \times \WW ) 
=& 
\mc \bigl( \{ (\xi,\eta) ;( \projN (\xi),\projN (\eta)) \in A \times \WW \} \bigr)
\\\notag 
=&
\mc \bigl( \projN ^{-1} (A) \times \projN ^{-1}(\WW ) \bigr)
\\ \notag 
=&
\mc \bigl( \projN ^{-1} (A) \times \ZZ \bigr)
\\\notag 
=&
\mu \bigl( \projN ^{-1} (A) \bigr)
\\\notag 
=&
\mu _N \bigl( A \bigr)
.\end{align}
The fourth equation follows from the fact that $ \mc $ is a coupling of $ \mu $ and $ \nu $. 
Since the same is true for $ \WW \times A $, we get that 
\begin{align}&\notag 
\mcN (\WW \times A ) = \nu_N (A)
.\end{align}
Moreover, 
by $ \mcN (\xi,\eta) = \mc \circ (\projN (\xi) , \projN (\eta) )^{ -1} $
\begin{align}&\notag 
\mcN ( \{ (\zeta , \omega ) \in \WW \times \WW ; \zeta \leq \omega \} ) 
\\ \notag = & 
\mc ( \{ (\xi , \eta ) \in \ZZ \times \ZZ ; \projN (\xi) \leq \projN (\eta) \})
\\ \notag = & 
1
.\end{align} 
The last equation follows from the fact that $ \mc $ is a monotone coupling of $ \mu $ and $ \nu $. 
Hence $ \mcN $ is a monotone coupling of $ \mu_N $ and $ \nu_N $. 
From this and \lref{l:42}, we obtain the claim. 
\end{proof}

We recall the notion of finitely dependence, which is a sufficient condition of the Bernoulli property. 
\begin{dfn}\label{dfn:31}
Let $ \Omega $ be a countable set. \\
\thetag{1} A probability measure $ \nu $ on $ \Omega ^{\Zd} $ is called $ r$-dependent if, for each $ R , S \subset \Zd$, 
\begin{align}\notag 
\inf\{ \dg(z,w); z \in R , w \in S \} \geq r
\Rightarrow 
 \sigma[ \pi _R ] \text{ and } \sigma[ \pi _S ] \text{ are independent}
.\end{align}
Here, $ \dg (z,w)$ is the graph distance on $ \Zd $ and $ \pi _R : \Omega^{\Zd} \rightarrow \Omega^{R} $ is the projection given by $ \{ \omega _{z} \}_{z \in \Zd} \mapsto \{ \omega _{z}\}_{z \in R}$. \\
\smallskip
\thetag{2} $ \nu $ is called finitely dependent if $ \nu$ is $ r$-dependent for some $ r \in \N $. 
\end{dfn}

Let $ \PM(M)$ be the set of translation-invariant probability measures on $ [M]^{\Zd}$. 
For $ x , y \in \Zd$, define $ x < y $ if $ x_i < y_i $ for $ i=\min\{ j=1,\ldots,d ; x_j \neq y_j \}$. For $ P , Q \subset \Zd$, we set $P < Q$ if $ x<y$ for all $ x \in P $ and $ y \in Q $ . 
\begin{dfn}[Very weak Bernoulli]\label{dfn:41}
We call $ \nu \in \PM (M)$ very weak Bernoulli if for each $ \epsilon >0$, there is a rectangle $ R \subset \Zd$ such that if, for any finite set $ Q=\{ x_1,\ldots,x_m \} <R $, there exists an $ \mathcal{A} \subset \sigma[\pi_Q] $ satisfying \eqref{:44x} and \eqref{:44y}$ :$
\begin{align} \label{:44x}
 &\ \nu(\bigcup _{A \in \mathcal{A}}A )>1-\epsilon 
.\\\label{:44y}
& \inf _{\cpl \in \CPL (\nu|_R , \nu_A |_R )}
 \mathsf{E}^{\cpl }[\frac{1}{\# R}
\sum _{z \in R}1_{\{X_z \neq Y_z\}} ] < \epsilon \text{ for } A \in \mathcal{A} 
.\end{align} 
Here $ \nu_A $ denotes the conditional probability measure under $ A$, 
$ \nu | _{R}= \nu \circ \pi _{R} ^{-1}$, 
and $ \nu_A | _{R} = \nu_{A} \circ \pi _{R} ^{-1}$. 
Furthermore, $ \CPL (\nu|_R , \nu_A |_R )$ is the collection of the couplings of $\nu | _{R}$ and $ \nu_A |_R $, 
and $ ((X_z)_{z \in R},(Y_z)_{z \in R}) $ is a random variable corresponding to $ \gamma $. 
\end{dfn}

\begin{lem} \label{l:44}
If $ \nu \in \PM(M)$ is finitely dependent, then $ \nu $ is very weak Bernoulli.
\end{lem}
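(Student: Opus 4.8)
The plan is to show that the finite-dependence hypothesis gives us, for free, a uniform decorrelation that makes the very-weak-Bernoulli criterion of Definition \ref{dfn:41} almost immediate: once a set $Q$ of sites lies far enough in the past (with respect to the order $<$) from a rectangle $R$, the conditional law of the configuration on $R$ given any atom of $\sigma[\pi_Q]$ should coincide with the unconditional law on $R$. So the whole argument reduces to exhibiting, for a given $\epsilon > 0$, a rectangle $R$ whose distance to any admissible $Q < R$ exceeds the dependence range $r$.

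Concretely, suppose $\nu \in \PM(M)$ is $r$-dependent for some $r \in \N$. Fix $\epsilon > 0$ and choose any rectangle $R \subset \Zd$; since the dependence range $r$ is finite, I would further arrange that $R$ is separated from the half-space $\{x : x < R\}$ by translating or by padding $R$ so that $\inf\{\dg(z,w) : z \in R,\ w < R\} \geq r$. The point is that the order $<$ from Definition \ref{dfn:41} forces any candidate set $Q = \{x_1,\dots,x_m\} < R$ to sit entirely in that half-space, hence at graph distance $\geq r$ from $R$. By $r$-dependence, $\sigma[\pi_R]$ and $\sigma[\pi_Q]$ are then independent.

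With independence in hand, I take $\mathcal{A}$ to be the collection of \emph{all} atoms of $\sigma[\pi_Q]$ (or, since $Q$ is finite and the alphabet $[M]$ is finite, the finite partition of $\Omega^{\Zd}$ into cylinder sets $\{\pi_Q = q\}$ of positive $\nu$-measure). Then $\nu(\bigcup_{A \in \mathcal{A}} A) = 1 > 1 - \epsilon$, so \eqref{:44x} holds trivially. For \eqref{:44y}, independence gives $\nu_A|_R = \nu|_R$ for every $A \in \mathcal{A}$: conditioning on an event in $\sigma[\pi_Q]$ does not change the law of $\pi_R$. Taking $\cpl$ to be the diagonal coupling of $\nu|_R$ with itself, the integrand $\frac{1}{\#R}\sum_{z \in R} 1_{\{X_z \neq Y_z\}}$ vanishes identically, so the infimum in \eqref{:44y} is $0 < \epsilon$. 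Hence $\nu$ is very weak Bernoulli.

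I do not expect a genuine obstacle here; the content is bookkeeping about the order $<$ and the geometry of rectangles. The one point requiring a little care is verifying that the separation condition $\inf\{\dg(z,w) : z \in R,\ w \in Q\} \geq r$ really holds for \emph{every} finite $Q < R$ and not merely for sites adjacent to $R$ — but this follows because $<$ is a total order placing all of $Q$ strictly before $R$, so once the leading face of $R$ is pushed $r$ steps away from the boundary of the half-space $\{x < R\}$, all of $Q$ is automatically at distance $\geq r$. The translation-invariance of $\nu$ ensures that such a rectangle $R$ can be chosen independently of $Q$, which is exactly what Definition \ref{dfn:41} demands.
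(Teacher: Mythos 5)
There is a genuine gap at the heart of your argument: the separation claim $\inf\{\dg(z,w)\,:\, z \in R,\ w < R\} \geq r$ cannot be arranged for any rectangle $R$. The relation $<$ in \dref{dfn:41} is the lexicographic order on $\Zd$, and the lexicographic past of a rectangle always contains points adjacent to it: if $a=(a_1,\dots,a_d)$ is the lexicographically smallest corner of $R$, then $w=(a_1-1,a_2,\dots,a_d)$ satisfies $w<x$ for every $x\in R$ (the first coordinate in which they differ is the first one, where $w$ is smaller) while $\dg(w,a)=1$. Translating or padding $R$ does not help, because the half-space $\{x : x< R\}$ moves together with $R$, and \dref{dfn:41} requires the \emph{single} rectangle $R$ to work for \emph{every} finite $Q<R$, so you cannot exclude sets $Q$ that hug the boundary. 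Worse, $Q$ may run along an entire face of $R$ (in $d=2$, take $Q=\{(a_1-1,k) : a_2\le k\le b_2\}$), so $\sigma[\pi_Q]$ and $\sigma[\pi_R]$ need not be independent; your identification $\nu_A|_R=\nu|_R$ and the diagonal coupling therefore break down.

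The repair is to give up on full independence and control instead the \emph{fraction} of $R$ that feels $Q$, which is what the paper does. Let $Q_{r_0}$ be the $r_0$-neighbourhood of $Q$. By $r_0$-dependence, $\sigma[\pi_Q]$ and $\sigma[\pi_{Q_{r_0}^{c}}]$ are independent, so $\nu_A=\nu$ on $\sigma[\pi_{Q_{r_0}^{c}}]$ for $A\in\sigma[\pi_Q]$; one then couples $\nu|_R$ and $\nu_A|_R$ so that $X_z=Y_z$ for $z\in R\cap Q_{r_0}^{c}$ and arbitrarily on $R\cap Q_{r_0}$, which bounds the expectation in \eqref{:44y} by $\#(R\cap Q_{r_0})/\#R$. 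Because $Q$ lies in the lexicographic past of $R$, the set $R\cap Q_{r_0}$ is confined to a slab of thickness at most $r_0$ along one face of $R$ together with lower-dimensional corner pieces, so its proportion in $R$ tends to $0$ as the side lengths of $R$ grow; choosing $R$ large enough yields \eqref{:44y}, while taking $\mathcal{A}$ to be all positive-measure cylinders of $\sigma[\pi_Q]$ still gives \eqref{:44x}, exactly as in your proposal. That quantitative boundary estimate is the real content of the lemma, and it is the step your proof skips.
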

\begin{proof}
By definition, there exists an $ r_0 $ such that $ \nu$ is $ r_0$-dependent. 
For $ \epsilon >0 $, let $ R \subset \Zd$ be a rectangle such that ${(r_0 )^d}/{\# R} < \epsilon$. 
Let $ Q =\{z_1,\ldots,z_m \} \subset \Zd $ be a finite set such that $ Q<R $. 
We set 
\begin{align}\notag 
Q_{r_0} = \{ w \in \Zd ; \dg (z,w) \leq r_0 \text{ for some } z \in Q \}
.\end{align}
Then $ \# R \cap Q_{r_0} \leq (r_0)^d$. 
By $ r_0$-dependence, $ \sigma[\pi_{Q}]$ and $ \sigma[\pi_{Q_{r_0}^c}]$ are independent under $ \nu$. 
Hence for each $ A \in \sigma[\pi_{Q}]$, $ \nu = \nu_A $ on $ \sigma[\pi_{Q_{r_0}^{c}}]$. 
Let $ \cpl _{A}$ be the coupling of $ \nu \circ \pi _R ^{-1} $ and $ \nu _A \circ \pi _R ^{-1}$ such that 
$ X_z = Y_z $ for $ z \in R \cap Q_{r_0}^{c} $ and $ X_z \perp Y_z$ for $ z \in R \cap Q_{r_0}$. 
Then 
\begin{align}\notag 
\mathsf{E}^{\cpl_A }[\frac{1}{\# R}
\sum _{z \in R}1_{\{X_z \neq Y_z\}} ] \leq \frac{(r_0 )^d}{\# R} < \epsilon 
.\end{align}
This implies the claim. 
\end{proof}
The very weak Bernoulli property is equivalent to the Bernoulli property for elements of $ \PM(M)$: 
\begin{lem}[\cite{Ornstein-74,o-w.amenable,steif.91}] \label{l:45}
For $ \nu \in \PM(M)$, the following statements are equivalent:\\
\thetag{1} $\nu \text{ is very weak Bernoulli.}$\\ 
\thetag{2} $\nu \text{ is isomorphic to a Bernoulli shift.}$
\end{lem}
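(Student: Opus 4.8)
The plan is to read \lref{l:45} as the $ \Zd $-action instance of Ornstein's isomorphism theory, so that the real work is matching \dref{dfn:41} to the very weak Bernoulli criterion of the cited references rather than rebuilding the machinery. First I would fix the system attached to $ \nu \in \PM(M) $: the shift $ \TZ $ on $ [M]^{\Zd} $ together with the canonical finite generating partition $ P=\{ \pi_{\{\mathbf 0\}}^{-1}(j) \}_{j \in [M]} $ recording the symbol at the origin. Since $ \Zd $ is a countable amenable group, the theory of Ornstein and Weiss~\cite{o-w.amenable} (with \cite{steif.91} for the $ \Zd $-action and \cite{Ornstein-74} for the base $ \Z $-theory) supplies the chain of equivalences \emph{Bernoulli} $ \Leftrightarrow $ \emph{finitely determined} $ \Leftrightarrow $ \emph{very weak Bernoulli} for such finitely generated stationary processes; the lemma is then obtained by identifying the two ends of this chain with \thetag{1} and \thetag{2}.

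For \thetag{2} $ \Rightarrow $ \thetag{1} I would argue in two steps. A genuine Bernoulli shift (a product measure on $ [M]^{\Zd} $) is very weak Bernoulli by a direct independence computation: since $ Q<R $ forces $ Q \cap R = \emptyset $, the marginal $ \nu|_R $ is unaffected by conditioning on any $ A \in \sigma[\pi_Q] $, so choosing $ \mathcal{A} $ to be the finite set of atoms of $ \sigma[\pi_Q] $ gives \eqref{:44x} with the full measure, and the diagonal coupling makes the cost in \eqref{:44y} vanish. The general case then follows because very weak Bernoulli is a property of the system that is independent of the chosen generator, hence an isomorphism invariant; this hereditary property is itself part of the Ornstein--Weiss theory.

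The substantive direction is \thetag{1} $ \Rightarrow $ \thetag{2}, and here I would follow the copying scheme of Ornstein adapted to $ \Zd $. Estimate \eqref{:44y} says that, on a large rectangle $ R $, the conditional laws given the past $ Q $ are $ \dbar $-close to the unconditional law $ \nu|_R $ for a family $ \mathcal{A} $ of conditions of measure nearly one. Propagating this control along a F{\o}lner (tiling) sequence of boxes upgrades very weak Bernoulli to the finitely determined property, and one then matches $ \nu $ with a Bernoulli shift of equal entropy via the Ornstein isomorphism theorem for amenable actions. The $ \Zd $-version of the Rokhlin lemma in \cite{o-w.amenable} (exact tilings of $ \Zd $ by F{\o}lner sets) is what carries the $ \Z $-arguments of \cite{Ornstein-74} into the $ \Zd $-setting of \cite{steif.91}.

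The main obstacle is exactly this implication \thetag{1} $ \Rightarrow $ \thetag{2}: it is the full weight of Ornstein's machinery, which I would quote rather than reprove. The one verification genuinely left to us is bookkeeping---confirming that the formulation in \dref{dfn:41}, with its rectangle $ R $, finite sets $ Q<R $, conditioning family $ \mathcal{A} $ obeying \eqref{:44x}, and coupling cost $ \tfrac{1}{\#R}\sum_{z\in R}1_{\{X_z\neq Y_z\}} $ in \eqref{:44y}, is the same very weak Bernoulli notion (modulo the standard passage between the normalized Hamming distance and $ \dbar $) used in the cited works. Given that identification, the equivalence is immediate.
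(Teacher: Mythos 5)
The paper offers no proof of \lref{l:45} at all---it is quoted verbatim from \cite{Ornstein-74,o-w.amenable,steif.91}---and your proposal does essentially the same thing: it correctly identifies the result as the $\Zd$-instance of the chain Bernoulli $\Leftrightarrow$ finitely determined $\Leftrightarrow$ very weak Bernoulli and defers the substantive implication \thetag{1}$\Rightarrow$\thetag{2} to those same sources. Your added checks (the direct independence computation showing a product measure is very weak Bernoulli, and the matching of \dref{dfn:41} with the cited formulation) are sound and consistent with how the paper uses the lemma, so there is nothing to correct.
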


\begin{lem}[e.g.\,\cite{RL-St.03}]\label{l:x33}
If $ \nu \in \PM(M)$ is finitely dependent, then $ \nu $ is isomorphic to a Bernoulli shift. 
\end{lem}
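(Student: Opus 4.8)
The plan is to deduce \lref{l:x33} by chaining the two results that immediately precede it. The statement is essentially a composition: finitely dependent implies very weak Bernoulli (\lref{l:44}), and very weak Bernoulli is equivalent to being isomorphic to a Bernoulli shift (\lref{l:45}). So the proof is a short syllogism rather than a genuinely new argument, and the only real content is to check that the hypotheses line up correctly.

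First I would note that $\nu \in \PM(M)$ is assumed finitely dependent. By \lref{l:44}, since $\nu \in \PM(M)$ is finitely dependent, $\nu$ is very weak Bernoulli. This is the step where the combinatorial work actually happens, but it has already been carried out in the proof of \lref{l:44}: one chooses a rectangle $R$ with $(r_0)^d / \# R < \epsilon$, thickens the finite set $Q$ to its $r_0$-neighborhood $Q_{r_0}$, and exploits $r_0$-dependence to couple the marginals so that they agree off the small set $R \cap Q_{r_0}$. I would simply invoke the conclusion.

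Next I would apply \lref{l:45}, which asserts that for $\nu \in \PM(M)$ the very weak Bernoulli property and being isomorphic to a Bernoulli shift are equivalent. Since we have just established that $\nu$ is very weak Bernoulli, the implication \thetag{1} $\Rightarrow$ \thetag{2} of \lref{l:45} gives that $\nu$ is isomorphic to a Bernoulli shift, which is the desired conclusion.

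The main obstacle here is not mathematical depth but bookkeeping: one must be sure that $\nu$ genuinely lies in $\PM(M)$, so that both \lref{l:44} and \lref{l:45} apply (both are stated for elements of $\PM(M)$), and that the notion of "finitely dependent" in \dref{dfn:31} is the same one feeding \lref{l:44}. Given that these compatibility conditions hold by hypothesis, the proof is immediate. I would write it as follows.
\begin{proof}
By \lref{l:44}, since $ \nu \in \PM(M)$ is finitely dependent, $ \nu $ is very weak Bernoulli. Hence, by the implication \thetag{1} $ \Rightarrow $ \thetag{2} of \lref{l:45}, $ \nu $ is isomorphic to a Bernoulli shift.
\end{proof}
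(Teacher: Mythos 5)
Your proposal is correct and follows exactly the same route as the paper, whose proof is the one-line combination of \lref{l:44} and \lref{l:45}. Nothing is missing; the chaining of the two preceding lemmas is precisely the intended argument.
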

\begin{proof}
From \lref{l:44} and \lref{l:45}, we obtain the claim. 
\end{proof}

We quote Theorem 5 in III.6 in \cite{o-w.amenable}: 
\begin{lem}[\cite{Ornstein-74,o-w.amenable}] \label{l:41}
Let $ \DSZd $ be an ergodic system. 
Let $\{ \mathcal{F}_n : n\in\N\}$ be an increasing sequence of $ \mathsf{S}_{\Zd}$-invariant sub $ \sigma$-fields. 
Let $ \bigvee _{n \in \N} \mathcal{F}_n$ be the completion of $ \sigma[ \bigcup _{n \in \N} \mathcal{F}_n ] $. 
Assume that $\{ \mathcal{F}_n : n\in\N\}$ satisfies \eqref{:41a1} and \eqref{:41a2}$ :$ 
\begin{align}\label{:41a1}
& \bigvee _{n \in \N} \mathcal{F}_n = \mathcal{F}. 
\\\label{:41a2}
& 
\mathcal{F}_n \text{-factor is isomorphic to a Bernoulli shift for each } n.
\end{align}
Then $ \DSZd$ is isomorphic to a Bernoulli shift.
\end{lem}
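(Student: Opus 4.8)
The statement is Theorem~5 in III.6 of \cite{o-w.amenable}, so the plan is to recall how it follows from the Ornstein--Weiss theory of the $\dbar$ distance for $\Zd$-actions rather than to give an independent argument. The governing principle I would use is that, for an ergodic $\Zd$-system, being isomorphic to a Bernoulli shift is detected at the level of finite partitions through the very weak Bernoulli property of \dref{dfn:41}, and that this property is preserved under $\dbar$-limits. These two facts, specialized to the symbolic setting $\PM(M)$, are the engine behind \lref{l:45}.

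First I would reduce to finite partitions. A finite partition $P$ of $\Omega$ with $M$ atoms, together with the action $\mathsf{S}_{\Zd}$, determines a coding $\Omega \to [M]^{\Zd}$ and hence a translation-invariant law $\nu_P \in \PM(M)$. By \lref{l:45} the $P$-process is isomorphic to a Bernoulli shift precisely when $\nu_P$ is very weak Bernoulli, and the Ornstein--Weiss theory further gives that the ergodic system $\DSZd$ is Bernoulli once every finite partition generates a very weak Bernoulli process. Thus it suffices to show that $\nu_P$ is very weak Bernoulli for each finite $P$.

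Next I would approximate and pass to the limit. Since $\bigvee_{n} \mathcal{F}_n = \mathcal{F}$ by \eqref{:41a1}, martingale (L\'evy) convergence lets me choose, for each $\epsilon>0$, an index $n$ and an $\mathcal{F}_n$-measurable finite partition $P_n$ with $\mathbb{P}(P \triangle P_n) < \epsilon$; letting $\epsilon \to 0$ produces $\mathcal{F}_n$-measurable partitions converging to $P$ in measure. A small perturbation of a partition moves the generated stationary process only slightly in $\dbar$ (via the obvious coupling), so the $P$-process is a $\dbar$-limit of the $P_n$-processes. By \eqref{:41a2} each $\mathcal{F}_n$-factor is Bernoulli, so every $\mathcal{F}_n$-measurable finite partition, and in particular $P_n$, generates a Bernoulli, hence very weak Bernoulli, process. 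Because the very weak Bernoulli class is $\dbar$-closed, the limiting $P$-process is very weak Bernoulli, and the reduction above then yields that $\DSZd$ is Bernoulli.

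The decisive input, and the main obstacle were one to attempt a self-contained proof, is the $\dbar$-closedness of the very weak Bernoulli property together with the passage from partition-level very weak Bernoulli to system-level Bernoullicity for an \emph{amenable} group action. Both require running Ornstein's finitely determined / very weak Bernoulli machinery with F\o lner sets in place of intervals, along with the amenable entropy theory, rather than the one-dimensional interval arguments. This is precisely the work carried out in \cite{o-w.amenable} (and \cite{Ornstein-74}), which I would quote rather than reproduce here.
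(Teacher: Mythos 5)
The paper gives no proof of \lref{l:41}: it is quoted verbatim as Theorem~5 of III.6 in \cite{o-w.amenable}, so there is nothing to compare your argument against line by line. Your outline --- reduce to finite partitions, approximate an arbitrary partition by $\mathcal{F}_n$-measurable ones via \eqref{:41a1} and martingale convergence, note that each such partition process is very weak Bernoulli by \eqref{:41a2}, and conclude by the $\dbar$-closedness of the very weak Bernoulli class --- is a correct sketch of the standard Ornstein--Weiss proof, and you appropriately defer the genuinely hard inputs (the F\o lner-set version of the finitely determined/very weak Bernoulli machinery) to the same references the paper cites.
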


Let $ \mu$ and $ \nu \in \PM(M)$. 
Define $ \dbar : \PM(M) \times \PM(M) \rightarrow [0,1]$ by 
\begin{align} \label{:44w}&
\dbar (\mu ,\nu )= \inf _{ \cpl \in \CPL (\mu , \nu) } 
\cpl ( \{ (\zeta , \omega ) \in [M]^{\Zd} \times [M]^{\Zd } ; \zeta_0 \neq \omega_0 \} )
. 
\end{align}
Then $ \dbar $ gives a metric on $ \PM(M)$.
The Bernoulli property is closed under $ \dbar$: 
\begin{lem}[\cite{Ornstein-74,o-w.amenable,steif.91}] \label{l:48}
Let $ \nu $ and $ \{\nu_n : n \in \N\}$ be elements of $ \PM(M)$. 
Suppose that $ \lim_{n \rightarrow \infty} \dbar(\nu_n , \nu)=0$ 
and that 
each $ \nu_n$ is isomorphic to a Bernoulli shift. 
Then $ \nu$ is isomorphic to a Bernoulli shift. 
\end{lem}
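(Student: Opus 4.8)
The plan is to reduce everything to the very weak Bernoulli (VWB) property and to prove that VWB is closed under $ \dbar $. By \lref{l:45}, for elements of $ \PM(M) $ being isomorphic to a Bernoulli shift is equivalent to being VWB, so each $ \nu_n $ is VWB, and it suffices to show that $ \dbar(\nu_n,\nu)\to 0 $ together with ``$ \nu_n $ VWB'' forces $ \nu $ to be VWB; the conclusion then follows from \lref{l:45} applied to $ \nu $. Throughout I work with the reformulation of \dref{dfn:41} in which, up to Markov's inequality, the existence of a large set $ \mathcal{A} $ satisfying \eqref{:44x} and \eqref{:44y} is equivalent to smallness of the \emph{averaged conditional} Hamming distance: writing $ \dbar_R $ for the minimal expected normalized Hamming distance over a rectangle $ R $, VWB asserts that for every $ \epsilon $ there is a rectangle $ R $ with $ \int \dbar_R(\nu|_R,\nu_a|_R)\,d\nu(a) $ small, where $ \nu_a $ is the conditional law given a past configuration $ a $ on some $ Q<R $ and the average is taken in $ a $.

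The first technical ingredient I would establish is a \emph{stationary} optimal coupling. The set of translation-invariant couplings of $ \nu_n $ and $ \nu $ on $ [M]^{\Zd}\times[M]^{\Zd} $ is weak-$ * $ compact and the functional $ \cpl \mapsto \cpl(\{\zeta_0\neq\omega_0\}) $ is continuous, so the infimum in \eqref{:44w} is attained by a translation-invariant $ \cpl $; that restricting to stationary couplings does not increase the value of $ \dbar $ is the standard Følner-averaging fact. For such a $ \cpl $, stationarity gives $ \cpl(\{X_z\neq Y_z\})=\dbar(\nu_n,\nu) $ for every $ z $, so restricting $ \cpl $ to the coordinates in any rectangle $ R $ yields a coupling of $ \nu_n|_R $ and $ \nu|_R $ with $ \mathsf{E}^{\cpl}[\frac{1}{\#R}\sum_{z\in R}1_{\{X_z\neq Y_z\}}]=\dbar(\nu_n,\nu) $. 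In particular
\[ \dbar_R(\nu_n|_R,\nu|_R)\le \dbar(\nu_n,\nu) \quad\text{uniformly in } R. \]

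With these in hand I would carry out the transfer. Fix $ \epsilon>0 $, choose $ n $ with $ \dbar(\nu_n,\nu) $ small, and apply VWB of $ \nu_n $ (with a small parameter) to obtain a rectangle $ R $; for a finite $ Q<R $ this produces a family $ \mathcal{A}_n\subset\sigma[\pi_Q] $ of $ \nu_n $-pasts of measure $ >1-\epsilon $ on which $ \dbar_R(\nu_n|_R,(\nu_n)_{a'}|_R) $ is small. To build the corresponding family for $ \nu $, I disintegrate the stationary coupling $ \cpl $ over the pasts on $ Q $, which matches a set of $ \nu $-pasts $ a $ of large measure with $ \nu_n $-pasts $ a' $ agreeing off a set of density $ \dbar(\nu_n,\nu) $, and then estimate, for such matched pairs,
\[ \dbar_R(\nu|_R,\nu_a|_R)\ \le\ \dbar_R(\nu|_R,\nu_n|_R)\ +\ \dbar_R(\nu_n|_R,(\nu_n)_{a'}|_R)\ +\ \dbar_R((\nu_n)_{a'}|_R,\nu_a|_R). \]
The first term is $ \le\dbar(\nu_n,\nu) $ by the uniform bound above, and the second is small for $ a'\in\mathcal{A}_n $.

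The main obstacle is the third term. Conditioning need not contract $ \dbar $: even with $ \nu_n,\nu $ close in $ \dbar $ and $ a',a $ matched, the conditional laws $ (\nu_n)_{a'} $ and $ \nu_a $ may a priori be far apart, and this non-contraction is exactly the difficulty the Ornstein theory is built to handle. Rather than controlling each third term individually, the correct argument uses the freedom already present in \dref{dfn:41} to discard a set of exceptional $ \nu $-pasts of measure $ <\epsilon $, and constructs the coupling of $ \nu_a|_R $ with $ \nu|_R $ directly at the level of full sample paths by disintegrating $ \cpl $, so that the $ \nu_n $-conditional is bypassed rather than compared. Carrying this matching out quantitatively — which is the substance of the cited results of Ornstein and of Ornstein–Weiss, and is tractable here because $ \PM(M) $ has finite entropy and entropy is $ \dbar $-continuous — shows that $ \int\dbar_R(\nu|_R,\nu_a|_R)\,d\nu(a) $ is small, hence that $ \nu $ is VWB. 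By \lref{l:45}, $ \nu $ is then isomorphic to a Bernoulli shift.
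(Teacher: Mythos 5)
The paper offers no proof of \lref{l:48}: it is quoted directly from the cited literature (Ornstein's book, Ornstein--Weiss, and Steif), so there is no internal argument to compare yours against. Judged on its own terms, your proposal is a reasonable road map but not a proof. The reduction to the very weak Bernoulli property via \lref{l:45}, the existence of a translation-invariant optimal coupling by weak-$*$ compactness and F\o lner averaging, and the resulting uniform bound $\dbar_R(\nu_n|_R,\nu|_R)\le\dbar(\nu_n,\nu)$ are all correct and standard. The gap is exactly where you place it: the third term $\dbar_R((\nu_n)_{a'}|_R,\nu_a|_R)$. A $\dbar$-small coupling of the joint laws gives essentially no control on the $\dbar$-distance between conditional laws given matched pasts, and your proposed fix --- construct the coupling of $\nu_a|_R$ with $\nu|_R$ directly, ``which is the substance of the cited results of Ornstein and of Ornstein--Weiss'' --- is an appeal to the very theorem being proved. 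The closing remark that the matching is tractable because $\PM(M)$ has finite entropy and entropy is $\dbar$-continuous does not supply the missing step either; entropy continuity is an ingredient of the standard argument, not a substitute for it.

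If you want a self-contained route, the usual one does not try to show that very weak Bernoulli is $\dbar$-closed directly. Instead one passes through the third equivalent characterization, the finitely determined property: a process is finitely determined if every stationary process sufficiently close to it in entropy and in distribution on a large finite window is $\dbar$-close to it. That class is closed under $\dbar$ limits almost by definition (a triangle inequality in $\dbar$ together with the $\dbar$-continuity of entropy), and the equivalence of finitely determined, very weak Bernoulli, and Bernoulli is the content of the Ornstein--Weiss machinery that the paper already imports through \lref{l:45}. That is presumably why the author states \lref{l:48} as a quoted lemma rather than proving it; if you keep your VWB-based outline, the honest conclusion is that the third term cannot be handled by the tools you have set up.
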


\begin{prop}\label{l:x32}

Let $ (\ZZ , \nu, \T_{\Zd}) $ be ergodic. Let $ \nu$ be simple. 
Suppose that there exists a sequence $\{ \nu _r : r \in \N \} $ of point processes on $ \ZN $ such that
\begin{align}\label{l:x32a}
 & \nu_{r,N}\text{ is isomorphic to a Bernoulli shift for each } r \text{ and } N \in \N. \\\label{l:x32b}
 & \lim_{r \to \infty} \dbar(\nu_{r,N},\nu_N) = 0 \text{ for each }N \in \N.
\end{align}
Here, $ \nu_{r,N}=\nu_r \circ \varpi_N^{-1} $ and $ \nu_N= \nu \circ \varpi_N^{-1}$. 
Then, $ \nu$ is isomorphic to a Bernoulli shift. 
\end{prop}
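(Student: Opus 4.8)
The plan is to establish Bernoullicity of $ \nu $ in two stages: first show that each finite coarse-graining $ \nu_N $ is isomorphic to a Bernoulli shift by exploiting the $ \dbar $-closedness of the Bernoulli property (\lref{l:48}), and then transfer this from the factors $ \nu_N $ up to $ \nu $ itself through the Ornstein--Weiss increasing-$ \sigma $-field criterion (\lref{l:41}).

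For the first stage, fix $ N \in \N $ and identify $ \WW $ with $ [2^N]^{\Zd} $ by reading the letter at $ z \in \Zd $ as the vector $ (\omega_{z,l})_{l \in \NN } \in \{0,1\}^{\NN } $. Since $ \nu $ is $ \TZ $-invariant and $ \projN $ intertwines $ \TZ $ on $ \ZZ $ with the shift on $ \WW $, the image $ \nu_N=\nu \circ \projN^{-1} $ belongs to $ \PM(2^N) $; the measures $ \nu_{r,N} $ lie in $ \PM(2^N) $ as well, their translation invariance being implicit in the assertion \eqref{l:x32a} that they are Bernoulli. By \eqref{l:x32a} each $ \nu_{r,N} $ is isomorphic to a Bernoulli shift, and by \eqref{l:x32b} we have $ \dbar(\nu_{r,N},\nu_N)\to 0 $ as $ r \to \infty $. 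Applying \lref{l:48} to the sequence $ \{\nu_{r,N}\}_r $ and its limit $ \nu_N $ shows that $ \nu_N $ is isomorphic to a Bernoulli shift, for every $ N $.

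For the second stage, set $ \mathcal{F}_N:=\sigma[\projN] $, the completed sub-$ \sigma $-field of $ \mathcal{B}(\ZZ) $ generated by $ \projN $. I must check the hypotheses of \lref{l:41}. Invariance of each $ \mathcal{F}_N $ under $ \TZ $ follows from the equivariance of $ \projN $ noted above. The sequence is increasing, $ \mathcal{F}_N \subset \mathcal{F}_{N+1} $, because $ \projN $ is recovered from $ \proj_{N+1} $: the coordinates with $ l \le N-1 $ coincide, while the top coordinate $ \omega_{z,N}^{N} $ equals the maximum of $ \omega_{z,N}^{N+1} $ and $ \omega_{z,N+1}^{N+1} $, reflecting the splitting of $ \{(z,m):m\ge N\} $ into $ \{(z,N)\} $ and $ \{(z,m):m\ge N+1\} $. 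Finally $ \bigvee_N \mathcal{F}_N=\mathcal{F} $: since $ \nu $ is simple, a configuration $ \xi $ is $ \nu $-a.s.\ determined by the occupation indicators $ 1_{\{\xi(\{(z,l)\})\ge 1\}} $, each of which is $ \mathcal{F}_N $-measurable once $ N>l $, so these indicators generate $ \mathcal{F} $ modulo $ \nu $-null sets, giving \eqref{:41a1}. The $ \mathcal{F}_N $-factor of $ (\ZZ,\nu,\TZ) $ is, in the sense of the $ \phi $-factor of \sref{s:1} with $ \phi=\projN $, the system determined by $ \nu_N $ on $ \WW $, which is isomorphic to a Bernoulli shift by the first stage; this gives \eqref{:41a2}. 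As $ (\ZZ,\nu,\TZ) $ is ergodic by assumption, \lref{l:41} yields that $ \nu $ is isomorphic to a Bernoulli shift.

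The mechanical verifications (equivariance of $ \projN $ and the maximum-formula behind the increasing property) are routine. The conceptual point, and the reason the two-stage structure is forced, is that $ \dbar $ cannot be applied to $ \nu $ directly: the system may carry infinite entropy, on which $ \dbar $ degenerates, so the approximation must be run on the finite-alphabet factors $ \nu_N $ where $ \dbar $ is meaningful and only then reassembled. The single place where a hypothesis is used in an essential way is the generation identity $ \bigvee_N \mathcal{F}_N=\mathcal{F} $, which relies on simplicity of $ \nu $; without it the coarse-grainings would discard multiplicity information and fail to separate configurations.
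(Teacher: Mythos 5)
Your proof is correct and follows essentially the same route as the paper: Lemma~\ref{l:48} applied to $\nu_{r,N}\to\nu_N$ in $\dbar$ to get Bernoullicity of each factor $\nu_N$, then Lemma~\ref{l:41} applied to the increasing invariant $\sigma$-fields $\sigma[\projN]$. Your verification of the increasing property and of $\bigvee_N\sigma[\projN]=\mathcal{F}$ (where simplicity of $\nu$ is used) is more explicit than the paper's, but the argument is the same.
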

\begin{proof}
Recall that $ \QN = \{ \QQN : (z,l) \in \W \}$ is a partition of $ \ZN $. 
Here, $ \QQN $ is defined in \eqref{:33h}. 
Then $ \QN $ becomes finer as $ N \rightarrow \infty$ 
and 
$ \bigvee _{N \in \N} \QN $ separates points of $ \ZN $ 
by construction. 
Here, $ \bigvee _{N \in \N}\QN$ is the refinement of partitions $ \{ \QN \}_{N \in \N} $. 
From this, we obtain that 
$\{ \sigma [ \projN ] \}_{N \in \N }$ is increasing 
and 
$ \bigvee _{N \in \N} \sigma [ \projN ] $ separates points of $ \ZZ$. 
Hence $\{ \sigma [ \projN ] \}_{N \in \N }$ satisfies \eqref{:41a1}. 

From the assumptions \eqref{l:x32a} and \eqref{l:x32b} and \lref{l:48}, 
$ \nu_N $ is isomorphic to a Bernoulli shift. 
Hence $\{ \sigma [ \projN ] \}_{N \in \N }$ satisfies \eqref{:41a2}. 

From the above and \lref{l:41}, the claim holds. 
\end{proof}

\section{Approximations of the determinantal kernel}\label{s:2}

In this section, we introduce three approximations of the kernel $ K $ introduced in \eqref{:11z}. 

For $ r >0 $, let $ w_r : \Rd \rightarrow \R $ be the product of the tent function such that 
\begin{align} \notag 
w_r(x) =& \prod _{j=1}^{d} (1-|x_j|/r)1_{\{|x_j|<r\}} (x)
.\end{align}
We denote by $ \hat{w}_r $ its Fourier transform 
\begin{align}\notag 
\whr 
= 
\int _{\Rd } w_r(x) e^{2 \pi i x \cdot t } dx 
= 
r ^{-d}
\prod _{j=1}^{d} \bigl( \frac{\sin{\pi r t_j }}{\pi t_j } \bigr)^{2}
.\end{align} 
Let $ \hatk \in L^1 (\Rd)$ such that $ \hatk (t) \in [0,1]$ for a.s. $ t \in \Rd $. 
Set $ \hatk _r (t)= \hatk \ast \whr$. 
Then by $ \| \hat{w}_r \|_{L^1 (\Rd)} =1$, $ \hatk _r (t) \in [0,1]$ for a.e. $ t \in \Rd $. 
Let 
\begin{align} \label{:21a} 
\uk_r (x,y) =& \int _{\Rd} 
\bigl( \hatk_r (t) \wedge \hatk (t) \bigr) e^{2 \pi i (x-y) \cdot t} dt
, 
\\ 
\label{:21b}
\K _r(x,y)=& \int _{\Rd} 
\hatk _r (t) e^{2 \pi i (x-y) \cdot t} dt 
, 
\\ \label{:21c}
\ok_r (x,y) =& \int _{\Rd} 
\bigl( \hatk_r (t) \vee \hatk (t) \bigr) e^{2 \pi i (x-y) \cdot t} dt 
. 
\end{align} 
Here, $ a \wedge b = \max\{a,b\}$ and $ a \vee b = \min\{a,b\}$ for $ a,b \in \R$, respectively.
Then $ \uk _r , \K _r $, and $ \ok _r $ satisfies \thetag{1}--\thetag{4} before \tref{l:11}. 

For $ \K : X \times X \mapsto \C $, we denote $ O \leq K $ if $ K $ is nonnegative definite as an integral operator on $ L^2 (\Rd)$
and 
$ \K _1 \leq \K _2$ if $ K_2 - K_1 $ is nonnegative definite. 
\begin{lem} \label{l:21}
Let $ \uk _r , \K _r $, and $ \ok _r $ be as \eqref{:21a}, \eqref{:21b}, and \eqref{:21c}, respectively. Then 
\begin{align}\label{:21f}&
\Ku _r \leq K \leq \Ko _r 
,\\& \label{:21g}
\Ku _r \leq K_r \leq \Ko _r 
. 
\end{align}
\end{lem}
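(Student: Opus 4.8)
The plan is to exploit that all four kernels are translation invariant, so the associated operators on $ L^2(\Rd) $ are Fourier multipliers, and to reduce each operator inequality to a pointwise inequality between the corresponding symbols. By \eqref{:11z}, \eqref{:21a}, \eqref{:21b}, and \eqref{:21c}, the kernels $ \Ku_r $, $ K_r $, $ K $, and $ \Ko_r $ are all of convolution type, $ K(x,y)=k(x-y) $, with symbols $ \hatk_r \wedge \hatk $, $ \hatk_r $, $ \hatk $, and $ \hatk_r \vee \hatk $ respectively, where $ \hatk_r \wedge \hatk $ and $ \hatk_r \vee \hatk $ denote the pointwise minimum and maximum of $ \hatk_r $ and $ \hatk $. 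Each symbol takes values in $ [0,1] $ and lies in $ L^1(\Rd)\cap L^\infty(\Rd) $, so first I would record that, the symbols being bounded, each convolution operator is a bounded self-adjoint operator on $ L^2(\Rd) $ with norm at most $ 1 $; in particular all the operator differences occurring in \eqref{:21f} and \eqref{:21g} are well defined.

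The key step is a Plancherel computation. For a symbol $ g \in L^1(\Rd)\cap L^\infty(\Rd) $ with convolution kernel $ k_g(x)=\int_{\Rd} g(t)\, e^{2\pi i x\cdot t}\, dt $, and for $ f $ in the dense class $ L^1(\Rd)\cap L^2(\Rd) $, the double integral $ \int_{\Rd}\int_{\Rd} k_g(x-y)\, f(y)\, \overline{f(x)}\, dx\, dy $ converges absolutely and, by Parseval, equals $ \int_{\Rd} g(t)\, |\hat f(t)|^2\, dt $, with $ \hat f $ the Fourier transform of $ f $. Hence the convolution operator with symbol $ g $ is nonnegative definite whenever $ g \ge 0 $ a.e.; more generally, $ g_1 \le g_2 $ a.e. implies that the convolution operator with symbol $ g_1 $ is dominated by that with symbol $ g_2 $, their difference being a convolution operator with nonnegative symbol and therefore nonnegative definite.

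Finally, I would invoke the elementary pointwise bounds, valid a.e.,
\begin{align}\notag
\hatk_r \wedge \hatk \le \hatk \le \hatk_r \vee \hatk,
\qquad
\hatk_r \wedge \hatk \le \hatk_r \le \hatk_r \vee \hatk,
\end{align}
since the pointwise minimum and maximum bracket each of the two entries. Applying the monotonicity of the previous step to each bracketing inequality shows that $ K-\Ku_r $, $ \Ko_r - K $, $ K_r - \Ku_r $, and $ \Ko_r - K_r $ all have nonnegative symbols and are thus nonnegative definite, which is exactly \eqref{:21f} and \eqref{:21g}. The only genuine technical point, and the part I would be most careful about, is the Fourier-analytic justification of the second step: the kernels $ k_g $ are merely bounded and continuous (not integrable), so the operators are not literally Hilbert--Schmidt integral operators, and one must verify the Parseval identity on a suitable dense class of test functions and confirm that nonnegativity of the resulting quadratic form there is precisely the notion of ``nonnegative definite as an integral operator'' used in the statement. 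Everything else is routine.
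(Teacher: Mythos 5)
Your proof is correct and follows essentially the same route as the paper: the paper's own argument simply records the pointwise inequalities between the symbols $\hatk_r \wedge \hatk \leq \hatk, \hatk_r \leq \hatk_r \vee \hatk$ and concludes the operator inequalities from \eqref{:21a}--\eqref{:21c}, which is exactly your Fourier-multiplier comparison with the Plancherel step made explicit. (Note the paper's stated convention $a \wedge b = \max\{a,b\}$, $a \vee b = \min\{a,b\}$ is evidently a typo; your standard reading of $\wedge$ as minimum and $\vee$ as maximum is the intended one, as otherwise the lemma would be false.)
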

\begin{proof}[Proof]
By construction, we see
\begin{align}\notag&
\hatk_r (t) \wedge \hatk (t) \leq \hatk (t) \leq \hatk_r (t) \vee \hatk (t)
\\& \notag
\hatk_r (t) \wedge \hatk (t) \leq \hatk_r (t) \leq \hatk_r (t) \vee \hatk (t)
. \end{align}
From \eqref{:21a}-\eqref{:21c} combined with above inequalities,
we obtain \eqref{:21f} and \eqref{:21g}. 
\end{proof}

\section{Tree representations of determinantal point processes} \label{s:3}

In this section, we introduce tree representations of determinantal point processes on $ \Rd$. 
Then we apply it to the determinantal point processes associated with the kernels introduced in \sref{s:2}. 
Before doing this, we recall the definition and well-known facts about determinantal point processes.

Let $ \mu$ be a point process on $ X$. 
A locally integrable symmetric function $ \rho^n : X^n \rightarrow [0,\infty) $ is called the $ n$-point correlation function of $ \mu $ (with respect to a Radon measure $ \m$ on $ X$ ) if 
\begin{align}\label{:dfn2}&
\mathsf{E}^{\mu}\bigl[\prod _{ i=1}^{k}\frac{\xx(A_i)!}{(\xx(A_i)-n_i)!} \bigr]=
\int _{A_1^{n_1}\times \cdots \times A_k^{n_k}}\rho^{n}(x_1, \ldots, x_n)
\m(dx_1)\cdots\m(dx_n)
\end{align} 
for any disjoint Borel subsets $ A_1,\ldots,A_k$ and for any $ n_i \in \N$, $ i=1,\ldots,k$ such that $ \sum_{i=1}^{k}n_i=n$. 
Let $ \K : X\times X \rightarrow \C $. 
We call $ \mu $ a determinantal point process 
with the kernel $ K $ and the Radon measure $ \m$ if 
the $ n$-point correlation function $ \rho ^n $ of $ \mu$ with respect to $ \m $ satisfies \eqref{:11a} for each $ n$. 

Assume $ \K : X \times X \rightarrow \C $ satisfies: 
\begin{align}\label{:31z1} 
 & \overline{ \K (x,y)}=\K (y,x).\\\label{:31z2} 
 & K_A \text{ is trace class for any compact } A \subset X. \\\label{:31z3} 
 & \mathsf{Spec}( \K) \subset [0,1].
\end{align}
Here, $ \K $ in \eqref{:31z3} is an integral operator on $ L^2(X,\m )$ such that 
$ \K f (x) = \int _X \K (x,y) \m (dy)$ and $ K_A$ in \eqref{:31z2} is its restriction on $ L^2 (A,\m)$. 
Then there exists the unique determinantal point process on $ X $ with the kernel function $ \K$. 

Next, we introduce the tree representations of the determinantal point processes. 
Let $ \mXX $ be the determinantal point process on $ \Rd$ with the kernel function $ \K $ satisfying \eqref{:31z1}--\eqref{:31z3}. 
First, we introduce a partition of $ \Rd$ and the associated orthonormal basis on $ L^2 (\Rd)$. 
Let $ P =\{ P _{z} : z \in \Zd \}$ be a partition of $ \Rd $ such that each $ P_{z}$ is relatively compact and
\begin{align}&\notag 
P _{z+w}=P_{z}+w \text{ for } z,w \in \Zd
.\end{align}
Here, $ A + x =\{ a+x ; a \in A\} $ for $ A \subset \Rd$ and $ x \in \Rd$. 
Let $ \ONB = \ONB _{P}=\{ \onb_{z,l} \}_{(z,l) \in \ZN }$ be an orthonormal basis on $ L^2 (\Rd)$ such that $ \mathsf{supp}\onb _{z,l} \subset P_{z} $ and 
\begin{align}\label{:31a}&
\onb _{z+w,l}(x)=\onb _{z,l} (x-w)
.\end{align}
For the kernel function $ \K $ above, 
let 
\begin{align}\label{:31b}&
\KK (z,l;w,m)=\int _{\Rd \times \Rd} \onb _{z,l} (x) \K (x,y) \onb _{w,m}(y) dxdy
.\end{align}
Then $ \KK $ is a kernel function on $ \ZN $. 
$ \KK $ satisfy \eqref{:31z1}--\eqref{:31z3} with respect to the counting measure on $ \ZN $: 
\begin{lem} \label{l:4x}
Assume that $ \K $ satisfies \eqref{:31z1}--\eqref{:31z3} with respect to $ L^2(\Rd)$. Then $ \KK $ satisfies \eqref{:31z1}--\eqref{:31z3} with respect to the counting measure on $ \ZN $. 
\end{lem}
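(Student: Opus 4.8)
The plan is to recognize $\KK$ as nothing but the matrix of the integral operator $\K$ expressed in the orthonormal basis $\ONB$, and then transport the three properties from $\K$ to $\KK$ by unitary equivalence. Concretely, since $\mathsf{Spec}(\K)\subset[0,1]$ the operator $\K$ is bounded and self-adjoint on $L^2(\Rd)$ with $0\le \K\le I$; in particular each entry $\KK(z,l;w,m)=\langle \onb_{z,l},\K\onb_{w,m}\rangle$ given by \eqref{:31b} is finite, satisfying $|\KK(z,l;w,m)|\le \|\K\|\le 1$ by Cauchy--Schwarz together with $\|\onb_{z,l}\|=1$. Because $\ONB=\{\onb_{z,l}\}$ is a complete orthonormal system, the map $U:L^2(\Rd)\to \ell^2(\ZN)$, $Uf=(\langle \onb_{z,l},f\rangle)_{(z,l)}$, is unitary with $U^{*}e_{z,l}=\onb_{z,l}$, and a direct computation of matrix coefficients gives $\KK=U\K U^{*}$. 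Thus $\KK$ is a bounded operator on $\ell^2(\ZN)$ that is unitarily equivalent to $\K$.

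Granting this identification, \eqref{:31z1} and \eqref{:31z3} are immediate. Self-adjointness of $\K$ passes to $\KK=U\K U^{*}$, since $(U\K U^{*})^{*}=U\K^{*}U^{*}=U\K U^{*}$; unwinding what self-adjointness of the matrix means gives $\overline{\KK(z,l;w,m)}=\KK(w,m;z,l)$, which is exactly \eqref{:31z1} for $\KK$ relative to the counting measure on $\ZN$. (Alternatively this can be checked directly by Fubini from $\overline{\K(x,y)}=\K(y,x)$.) For \eqref{:31z3}, unitary conjugation preserves the spectrum, so $\mathsf{Spec}(\KK)=\mathsf{Spec}(\K)\subset[0,1]$.

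Condition \eqref{:31z2} is the only one that looks different in the discrete setting, and it turns out to be automatic: a compact subset $A\subset\ZN$ is finite, so the restriction of $\KK$ to $\ell^2(A)$ is a finite matrix, hence finite rank and a fortiori trace class, so no analytic input is needed here. Consequently the genuine content of the lemma is only the bookkeeping of the first paragraph --- confirming that the double integral \eqref{:31b} really represents the matrix coefficient $\langle \onb_{z,l},\K\onb_{w,m}\rangle$ of $\K$ in the basis $\ONB$, which is where the conjugation/pairing convention implicit in \eqref{:31b} for $\ONB$ enters --- after which \eqref{:31z1}--\eqref{:31z3} for $\KK$ follow formally from unitary equivalence and the finiteness of compact subsets of $\ZN$. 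I expect this pairing bookkeeping, rather than any estimate, to be the only point requiring care.
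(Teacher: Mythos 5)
Your proposal is correct and follows essentially the same route as the paper: the paper's proof also rests on reading \eqref{:31z1} and \eqref{:31z2} directly off \eqref{:31b} and on the unitary equivalence $\KK = U\K U^{-1}$ for \eqref{:31z3}, except that it outsources the latter to Lemma 2 of \cite{o-o.tail} rather than spelling it out. Your added care about the pairing convention in \eqref{:31b} (which implicitly requires $\ONB$ to be real-valued) and the observation that compact subsets of $\ZN$ are finite are exactly the bookkeeping the paper leaves tacit.
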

\begin{proof}
By assumption and \eqref{:31b}, $ \KK $ satisfies \eqref{:31z1} and \eqref{:31z2}. 
\eqref{:31z3} follows from Lemma 2 in p.430 of \cite{o-o.tail}. 
\end{proof}
From \lref{l:4x} and the general theory in \cite{sosh,s-t.jfa}, there exists a determinantal point process $ \mZZ $ on $ \ZN $ associated with $ \KK $. 
We call $ \mZZ $ the tree representation of $ \mXX$ with respect to $ \ONB$. 
\begin{lem}[\cite{o-o.tail}] \label{l:4x2}
Let $ \pi : \ZZ \rightarrow \mathsf{Conf}(\Zd)$ such that 
\begin{align}\notag 
\eta \mapsto \pi(\eta)=\sum_{z \in \Zd} \eta(\{ z \} \times \N )\delta_{z}
.\end{align}
Then for $ A \in \sigma[\{\xi \in \ZZ ; \xi(P_z)=n\};z \in \Zd , n \in \N] $, 
\begin{align}&\notag 
\mZZ \circ \pi ^{-1} (A) = \mXX (A)
.\end{align} 
\end{lem}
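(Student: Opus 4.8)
The plan is to show that the two Borel measures on $ \mathsf{Conf}(\Zd)$ obtained by pushing forward, namely $ \mZZ \circ \pi^{-1}$ and the image of $ \mXX$ under the cell-count map $ \xi \mapsto \sum_{z \in \Zd} \xi(P_z)\delta_z$, coincide; the event $ A$ in the statement is exactly a member of the $ \sigma$-field generated by these cell counts. Since the target $ \sigma$-field is generated by the $ \pi$-system of finite-dimensional cylinder events $ \{\xi(P_z)=n_z : z \in F\}$ over finite $ F \subset \Zd$, by Dynkin's lemma it suffices to match, for every finite $ F$, the joint law of the occupation vector $ (\xi(P_z))_{z \in F}$ under $ \mXX$ with that of $ (\eta(\{z\}\times\N))_{z \in F}$ under $ \mZZ$; note that $ \pi(\eta)(\{z\}) = \eta(\{z\}\times\N)$ by the definition of $ \pi$. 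I would encode each such joint law through its probability generating function $ s \mapsto \mathsf{E}[\prod_{z\in F} s_z^{\,\cdot}]$ for $ s \in [0,1]^F$, which determines the finite-dimensional distribution.

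For the first measure I would invoke the standard determinantal generating-functional identity: for a kernel operator with $ 0 \le \K \le 1$ that is locally trace class (which holds by \eqref{:31z2}--\eqref{:31z3}), and for $ u = \sum_{z\in F} s_z 1_{P_z} + 1_{(\bigcup_{z\in F}P_z)^c}$, one has
\begin{align}\notag
\mathsf{E}^{\mXX}\Bigl[\prod_{z\in F} s_z^{\,\xi(P_z)}\Bigr]
= \det\bigl(I - (1-u)^{1/2}\,\K\,(1-u)^{1/2}\bigr),
\end{align}
a Fredholm determinant on $ L^2(\X)$; the operator is trace class because $ 1-u$ is supported on the relatively compact set $ \bigcup_{z\in F}P_z$. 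Applying the same identity to $ \mZZ$ on $ \ZN$ (legitimate since $ \KK$ satisfies \eqref{:31z2}--\eqref{:31z3} by \lref{l:4x}), with the weight $ v(z,l)=s_z$ for $ z\in F$ and $ v\equiv 1$ otherwise, gives
\begin{align}\notag
\mathsf{E}^{\mZZ}\Bigl[\prod_{z\in F} s_z^{\,\eta(\{z\}\times\N)}\Bigr]
= \det\bigl(I - (1-v)^{1/2}\,\KK\,(1-v)^{1/2}\bigr),
\end{align}
a Fredholm determinant on $ \ell^2(\ZN)$.

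The crux is then to identify these two Fredholm determinants. Let $ U : \ell^2(\ZN) \to L^2(\X)$ be the unitary sending the standard basis vector $ e_{z,l}$ to $ \onb_{z,l}$; this is well defined because $ \ONB$ is an orthonormal basis with $ \mathsf{supp}\,\onb_{z,l}\subset P_z$ and $ \{P_z\}$ partitions $ \X$, so $ \{\onb_{z,l}\}_l$ is an orthonormal basis of $ L^2(P_z)$ for each $ z$. By \eqref{:31b} the matrix of $ U^*\K U$ in the basis $ \{e_{z,l}\}$ is exactly $ \KK$, i.e.\ $ \KK = U^*\K U$. Moreover, since $ 1-u$ is constant equal to $ 1-s_z$ on each cell $ P_z$ (and $ 0$ off $ \bigcup_{z\in F}P_z$), multiplication by $ 1-u$ preserves each subspace $ L^2(P_z)$ and acts there as the scalar $ 1-s_z$; hence $ U^*(1-u)U$ is the diagonal multiplication operator with entries $ 1-v(z,l)$, that is $ U^*(1-u)U = (1-v)$. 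Combining, $ U^*\,(1-u)^{1/2}\K(1-u)^{1/2}\,U = (1-v)^{1/2}\KK(1-v)^{1/2}$, so the two operators are unitarily equivalent and their Fredholm determinants agree. This yields equality of the two generating functions for every $ F$ and every $ s\in[0,1]^F$, and the Dynkin argument completes the proof.

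I expect the main obstacle to be the rigorous justification of the determinantal generating-functional identity in the present (non-projection, infinite-rank) setting, together with the trace-class bookkeeping needed for the Fredholm determinants to be well defined; once that is in place, the unitary intertwining $ \KK = U^*\K U$ and the fact that a cell-wise constant multiplier is diagonalized by $ \ONB$ make the identification of determinants essentially formal. A secondary point to verify carefully is that $ \{\onb_{z,l}\}_l$ exhausts $ L^2(P_z)$, which is precisely what makes $ U$ unitary and renders $ 1-u$ block-diagonal in the basis $ \ONB$.
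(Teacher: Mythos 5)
Your proposal is correct, but it is doing substantially more work than the paper does: the paper's entire proof of this lemma is a citation of Theorem~2 (p.~427) of \cite{o-o.tail}, whereas you give a self-contained argument that in effect reconstructs the proof of that cited theorem. The ingredients you use are the right ones and are the same ones that underlie the reference: the generating-functional identity $\mathsf{E}[\prod_i u(x_i)]=\det(I-(1-u)^{1/2}\K(1-u)^{1/2})$ for a locally trace-class kernel with spectrum in $[0,1]$ (available from \cite{s-t.jfa}), the unitary intertwining $\KK=U^{*}\K U$ (which the paper itself invokes later, in the proof of \lref{l:33}, via Lemma~1 of \cite{o-o.tail}), and the observation that a cellwise-constant multiplier is block-diagonalized by $\ONB$ precisely because $\mathsf{supp}\,\onb_{z,l}\subset P_z$ and the $P_z$ partition $\Rd$, so that $\{\onb_{z,l}\}_l$ spans $L^2(P_z)$. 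Your Dynkin-system reduction to finite-dimensional occupation vectors is also the correct way to pass from equality of generating functions to equality of the two laws on the stated $\sigma$-field. What your route buys is independence from the external reference and an explicit identification of exactly which structural facts ($U$ unitary, $1-u$ block-diagonal) make the two Fredholm determinants coincide; what it costs is the trace-class bookkeeping you flag yourself, which the paper avoids entirely by outsourcing. One small point worth checking if you write this up in full: with the paper's formula \eqref{:31b} the identity $\KK=U^{*}\K U$ requires either real-valued $\onb_{z,l}$ or a complex conjugate on one factor, a notational issue in the paper rather than a gap in your argument.
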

\begin{proof}
From Theorem 2 in p.427 of \cite{o-o.tail}, we easily obtain the claim. 
\end{proof}

We apply the tree representations for the translation-invariant kernels on $ \Rd $ introduces in \sref{s:2}.

Assume that $ \K $ is given by \eqref{:11z}. 
Then $ \K $ is translation-invariant. 
Hence by construction $ \KK $ is translation-invariant with respect to the first coordinate $ \Zd$. 
From this we see that $ \mZZ $ is translation-invariant with respect to the first coordinate. 

Define $ \KKu _r$, $ \KK _r $, and $ \KKo _r$ similarly as \eqref{:31b} with replacement of $ \K $ with $ \Ku _r $, $ \K _r$, and $ \Ko _r$ in \eqref{:21a}--\eqref{:21c}, respectively. 
By construction, $ \Ku _r $, $ \K _r$, and $ \Ko _r$ satisfies \eqref{:31z1}--\eqref{:31z3}. 
Hence $ \KKu _r$, $ \KK _r $, and $ \KKo _r$ 
satisfy \eqref{:31z1}--\eqref{:31z3} with respect to the counting measure on $ \ZN $ by \lref{l:4x2}. 
Furthermore, $ \KKu _r$, $ \KK _r $, and $ \KKo _r$ are translation-invariant with respect to the first coordinate $ \Zd$. 

Let $\mZZru $, $ \mZZr $, and $ \mZZro $ be $ \KKu _r$-, $ \KK _r$- and $ \KKo _r$-determinantal point process, respectively. 
We remark that a determinantal point process $ \nu$ on $ \Zd$ has no multiple points with probability $ 1$.
Hence we can regard $ \nu$ as a probability measure on $ \{ 0,1 \}^{\Zd}$. 
We quote: 
\begin{lem}[\cite{RL.03}] \label{l:23}
Let $ \K _i : \Zd \times \Zd \rightarrow \C $ satisfying \eqref{:31z1}--\eqref{:31z3} $ (i=1,2)$. 
Assume that $ \K _1 \leq \K _2 $. 
Let $ \nu ^{\K _1}$ and $ \nu ^{\K _2}$ be the determinantal point processes with $ \K _1$ and $ \K _2$, respectively. 
Then there exists a monotone coupling of $ \nu ^{\K _1}$ and $ \nu ^{\K _2}$. 
\end{lem}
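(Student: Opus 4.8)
The plan is to reduce the existence of a monotone coupling to a stochastic domination statement and then prove the latter by finite approximation together with a coupling that is available for projection kernels. By \lref{l:42} it suffices to show $ \nu^{\K_1} \le \nu^{\K_2} $ in the stochastic order, i.e.\ $ \int f \, d\nu^{\K_1} \le \int f \, d\nu^{\K_2} $ for every monotone $ f $. To obtain this on the infinite set $ \Zd $, I would first pass to finite windows. For a finite $ \Lambda \subset \Zd $ the marginal of $ \nu^{\K_i} $ on $ \{0,1\}^{\Lambda} $ is again determinantal, with kernel the compression of $ \K_i $ to $ \ell^2(\Lambda) $, and compression preserves the operator order, so $ (\K_1)_{\Lambda} \le (\K_2)_{\Lambda} $. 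A monotone coupling on each window is a probability measure on the compact space $ \{0,1\}^{\Lambda} \times \{0,1\}^{\Lambda} $; since the event $ \{\zeta^1 \le \zeta^2\} $ is closed and the windowed marginals are consistent, any weak limit point as $ \Lambda \uparrow \Zd $ of these couplings is a monotone coupling of $ \nu^{\K_1} $ and $ \nu^{\K_2} $. Thus it is enough to treat a finite ground set $ E $.

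On a finite $ E $ I would first dispose of the case where both kernels are orthogonal projections, say onto $ H_1 \subseteq H_2 $. Chaining through a flag $ H_1 = G_0 \subset \cdots \subset G_m = H_2 $ with $ \dim G_{j+1} = \dim G_j + 1 $ reduces this to a single rank-one increment. For such an increment $ \mathbf{P}^{H_2} $ is supported on sets of cardinality $ \dim H_2 $ and $ \mathbf{P}^{H_1} $ on sets one smaller, so the natural coupling is to sample $ T \sim \mathbf{P}^{H_2} $ and then delete exactly one point of $ T $ according to a prescribed conditional law, producing $ S \subset T $; the content is that $ S $ can be made to have law $ \mathbf{P}^{H_1} $. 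This is verified through the exterior-algebra description of projection determinantal measures, relating $ \wedge^{\dim H_2} H_2 $ to $ \wedge^{\dim H_1} H_1 $ by contraction against the added unit vector. Because $ S \subset T $ by construction, the resulting coupling is monotone.

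For general contractions $ \K_1 \le \K_2 $ satisfying \eqref{:31z1}--\eqref{:31z3}, I would realise each $ \nu^{\K_i} $ as the marginal on $ E $ of a projection determinantal measure living on a doubled set $ E \sqcup E' $: there is an orthogonal projection $ P_i $ on $ \ell^2(E \sqcup E') $ whose $ (E,E) $-corner is $ \K_i $ and whose induced measure restricts on $ E $ to $ \nu^{\K_i} $. If the dilations can be chosen simultaneously so that their ranges are nested, $ \mathrm{ran}\,P_1 \subseteq \mathrm{ran}\,P_2 $, then the projection coupling of the previous paragraph applies to $ P_1, P_2 $, and restricting that monotone coupling from $ E \sqcup E' $ to $ E $ yields a monotone coupling of $ \nu^{\K_1} $ and $ \nu^{\K_2} $, since a marginal of a monotone coupling is monotone.

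The main obstacle is precisely this last reduction: manufacturing, from the single operator inequality $ \K_1 \le \K_2 $, a \emph{pair} of dilating projections with nested ranges (equivalently, coupling the random eigenspaces in the mixture representation $ \nu^{\K_i} = \int \mathbf{P}^{H}\, d\mathbb{M}_i(H) $ so that $ H_1 \subseteq H_2 $ almost surely), since the standard one-contraction dilation does not preserve the order of the corners. A tempting shortcut — couple the occupation of each site $ e $ one at a time using $ \K_1(e,e) \le \K_2(e,e) $ and then recurse on the conditioned kernels — does not work: conditioning a determinantal measure on a site being present lowers, while conditioning on it being absent raises, the inclusion probabilities elsewhere (the defining repulsion), so in the mixed branch the two conditioned kernels fall out of order. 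This is why a global argument, through the exterior algebra or through an order-preserving joint dilation, is required; it is exactly the content of the cited result of Lyons \cite{RL.03}, which we invoke.
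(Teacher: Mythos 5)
The paper offers no proof of \lref{l:23} at all: it is quoted as a black box from Lyons \cite{RL.03}, so there is no internal argument to compare yours against line by line. Taken as a reconstruction of the cited proof, your outline is faithful and, up to its last step, correct: the reduction to stochastic domination via \lref{l:42}, the passage to finite windows (compressions preserve the operator order, restrictions of determinantal processes are determinantal with the compressed kernel, the event $\{\zeta^1\le\zeta^2\}$ is closed, and a weak limit point of extended window couplings has the right marginals and support), the projection case by chaining a flag of subspaces and coupling a rank-one increment by point deletion justified through the exterior algebra, and the reduction of general contractions to projections by dilation on a doubled ground set. Your remark that the naive site-by-site conditional coupling fails because conditioning on absence \emph{raises} the conditioned kernel while conditioning on presence lowers it is also correct, and it is exactly the reason a global argument is unavoidable.

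The genuine gap is the one you name yourself: manufacturing from the single inequality $K_1\le K_2$ a pair of dilating projections with nested ranges (equivalently, coupling the random-subspace representations so that $H_1\subseteq H_2$ almost surely). That step is not a technicality to be outsourced --- it is the mathematical content of the theorem, and writing ``we invoke the cited result of Lyons'' at precisely that point makes the proposal circular as a standalone proof: you have reduced the lemma to itself. Since the paper also treats the lemma purely as a citation, this leaves you no worse off than the text, but if the aim is an actual proof you must either carry out an order-preserving joint dilation, or restructure the general case, for instance by interpolating $K_1\le K_t\le K_2$ along a path of positive contractions whose increments have rank one and handling each rank-one increment directly. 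Everything before that final step is sound; the final step is an honest signpost to where the theorem actually lives, not a proof of it.
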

Applying \lref{l:23}, we obtain the following: 
\begin{lem} \label{l:33}
Let $\mZZru $, $ \mZZ $, $ \mZZr $, and $ \mZZro $ be determinantal point processes on $ \ZN $ as above. 
Then 
\begin{align}\label{:33a}&
\mZZru \leq \mZZ \leq \mZZro, 
\\\label{:33b}&
\mZZru \leq \mZZr \leq \mZZro
.\end{align}
\end{lem}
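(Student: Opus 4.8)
The plan is to deduce both chains of inequalities \eqref{:33a} and \eqref{:33b} directly from the operator inequalities established in \lref{l:21} together with the monotone-coupling result \lref{l:23}. The bridge between the two is the observation that the operator order on $\Rd$-kernels is preserved when passing to the discrete kernels on $\ZN$ defined by \eqref{:31b}. So the first step is to record that if $\K_1 \leq \K_2$ as integral operators on $L^2(\Rd)$, then $\KK_1 \leq \KK_2$ as operators on $\ell^2(\ZN)$ with respect to the counting measure, where $\KK_i$ is obtained from $\K_i$ via \eqref{:31b}. This is immediate from the definition: for a finitely supported $a=\{a_{z,l}\}$, setting $f=\sum_{(z,l)} a_{z,l}\onb_{z,l} \in L^2(\Rd)$, one has $\langle a, \KK_i\, a\rangle_{\ell^2(\ZN)} = \langle f, \K_i f\rangle_{L^2(\Rd)}$ because $\{\onb_{z,l}\}$ is an orthonormal basis, so $\K_2-\K_1 \geq 0$ transfers verbatim to $\KK_2 - \KK_1 \geq 0$.

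Next I would apply \lref{l:21}. From \eqref{:21f} we have $\Ku_r \leq K \leq \Ko_r$ and from \eqref{:21g} we have $\Ku_r \leq K_r \leq \Ko_r$, all as operator inequalities on $L^2(\Rd)$. By the transfer step of the previous paragraph, these yield
\begin{align}\notag
\KKu_r \leq \KK \leq \KKo_r, \qquad \KKu_r \leq \KK_r \leq \KKo_r
\end{align}
as operator inequalities on $\ell^2(\ZN)$, where $\KK$, $\KKu_r$, $\KK_r$, $\KKo_r$ are the discrete kernels attached to $K$, $\Ku_r$, $K_r$, $\Ko_r$ respectively. These are exactly the hypotheses $\K_1 \leq \K_2$ needed in \lref{l:23}.

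Finally, I would invoke \lref{l:23} on the discrete space $\ZN$. Recall that each of $\KKu_r$, $\KK$, $\KK_r$, $\KKo_r$ satisfies \eqref{:31z1}--\eqref{:31z3} with respect to the counting measure (as noted just after \lref{l:4x2}, via \lref{l:4x}), so \lref{l:23} applies to each comparable pair. Applying it to the pairs $(\KKu_r,\KK)$, $(\KK,\KKo_r)$, $(\KKu_r,\KK_r)$, and $(\KK_r,\KKo_r)$ gives monotone couplings of the corresponding determinantal point processes, and by \lref{l:42} each monotone coupling is equivalent to the stochastic-domination relation $\leq$. This yields $\mZZru \leq \mZZ \leq \mZZro$ and $\mZZru \leq \mZZr \leq \mZZro$, which are precisely \eqref{:33a} and \eqref{:33b}. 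The only mild subtlety, and the step most worth stating carefully, is the operator-order transfer in the first paragraph; everything after it is a direct citation. One should also note that \lref{l:23} as stated is for kernels on $\Zd$, so it should be read as applying to determinantal point processes on the countable index set $\ZN$ (equivalently, identifying $\ZN$ with $\Zd$ via a bijection), which is harmless since the lemma only uses the countable structure and the operator inequality.
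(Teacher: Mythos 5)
Your proposal is correct and follows essentially the same route as the paper: transfer the operator inequalities of \lref{l:21} from $L^2(\Rd)$ to the discrete kernels on $\ZN$ (the paper does this by citing $\KK = U\K U^{-1}$ for the unitary $U$ sending $\onb_{z,l}$ to the canonical basis, which is exactly your quadratic-form computation), and then apply \lref{l:23} together with \lref{l:42} to each comparable pair.
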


\begin{proof}[Proof]
Recall that $ \ONB $ is the orthonormal basis of $ L^2(\Rd )$ given in \eqref{:31a}. 
Let $ \map{U}{L^2(\Rd )}{L^2(\ZN )}$ be the unitary operator such that 
$ U (\onb _{z,n}) = e_{z,n}$, where 
$ \{e_{z,n} \}_{(z,n)\in \ZN }$ 
is the canonical orthonormal basis of $L^2(\ZN )$. 
Then by Lemma 1 in Section 3 of \cite{o-o.tail} we see that $ \KK = U \K U^{-1}$. 
From this and \lref{l:21}, we obtain 
\begin{align} \label{:33e}
\KKu _r \leq \KK \leq \KKo _r ,
\\ \label{:33f}
\KKu _r \leq \KK _r \leq \KKo _r
.\end{align}
From \eqref{:33e} and \eqref{:33f} combined with \lref{l:23}, we conclude \eqref{:33a} and \eqref{:33b}. 
\end{proof}

Recall that $ \KKu _r$, $ \KK _r $, and $ \KKo _r$ are translation-invariant with respect to the first coordinate. 
Hence $\mZZru $, $ \mZZr $, and $ \mZZro $ are also translation-invariant with respect to the first coordinate. 
Let $ \T_{\Zd}=\{\T_a : a \in \Zd \}$ and 
\begin{align}\notag 
\text{$ \T_{a}: \sum _{i \in \N} \delta_{(z_i,l_i)} \mapsto \sum_{i \in \N} \delta _{(z_i + a,l_i)}$ for $ a \in \Zd$.}
\end{align}
Then $\DSZru $, $ \DSZ $, $ \DSZr $, and $ \DSZro $ are $ \Zd$-action systems.

\section{The Bernoulli property of $ \DSZ $}\label{s:4}
We continue the setting of \sref{s:3}. 
Let $ \KK $ be the kernel defined by \eqref{:31b}. 
Let $ \mZZ $ be the $ \KK $-determinantal point process as before. 
The purpose of this section is to prove the Bernoulli property for $ \DSZ$.

Let $ \projN $ be the map defined by \eqref{:33j}. 
Let $ \DSWr $ denote the $ \varpi _N$-factor of $ \DSZr $. 
Here, $ \TZ $ in $ \DSWr $ is the shift of $ \WW $ such that for each $ a \in \Zd$
\begin{align}\notag &
\T _a : \omega = \{\omega_{z,l} \}_{(z,l)\in \W } \mapsto \{\omega_{z + a ,l} \}_{(z,l)\in \W } 
.\end{align}
We also denote $ \varpi _N$-factors of $\DSZru $, $ \DSZr $, and $ \DSZro $ 
by $(\WW , \mWWru , \TZ )$, $ (\WW , \mWWr , \TZ )$, and $(\WW , \mWWro , \TZ )$, respectively. 
We shall prove that $ \DSW $ is isomorphic to a Bernoulli shift. 
\begin{lem}	\label{l:43}
\begin{align}&\notag 
\mWWru \leq \mWW \leq \mWWro 
,\\ &\notag 
\mWWru \leq \mWWr \leq \mWWro 
.\end{align}
\end{lem}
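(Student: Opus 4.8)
The plan is to deduce both chains of inequalities from \lref{l:33} by pushing the stochastic orderings on $\ZN$ forward through the map $\projN$, using \pref{l:x31}. First I would record the identification of the measures involved: by the construction in this section, $\mWWru$, $\mWW$, $\mWWr$, and $\mWWro$ are precisely the $\projN$-image measures of $\mZZru$, $\mZZ$, $\mZZr$, and $\mZZro$, respectively; that is, $\mWWr = \mZZr \circ \projN^{-1}$ and likewise for the other three. Thus each inequality asserted in the lemma is the $\projN$-push-forward of a corresponding ordering between point processes on $\ZN$.

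Next I would check that the four processes $\mZZru$, $\mZZ$, $\mZZr$, $\mZZro$ are simple point processes on $\ZN$. This holds because a determinantal point process on a discrete space (here $\ZN$ with the counting measure) has no multiple points almost surely, as already noted for determinantal processes on $\Zd$. Simplicity is exactly the hypothesis needed to invoke \pref{l:x31}.

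With these preliminaries in place, \lref{l:33} supplies the orderings $\mZZru \leq \mZZ \leq \mZZro$ and $\mZZru \leq \mZZr \leq \mZZro$ on $\ZN$. Applying \pref{l:x31} to each adjacent pair---with $(\mu,\nu)$ taken to be $(\mZZru,\mZZ)$, $(\mZZ,\mZZro)$, $(\mZZru,\mZZr)$, and $(\mZZr,\mZZro)$---and using that $\nu \mapsto \nu_N$ is the push-forward under $\projN$, yields the four inequalities of the statement. I do not anticipate a genuine obstacle here: all the substantive work is already carried by \pref{l:x31}, which lifts a monotone coupling through $\projN$, together with the monotone-coupling input furnished by \lref{l:33} (ultimately via \lref{l:23}). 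The only care required is the bookkeeping identification of $\mWWr$ and its companions with the image measures, and the one-line remark on simplicity.
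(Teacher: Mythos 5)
Your proposal is correct and follows exactly the paper's route: the paper proves Lemma~\ref{l:43} by combining Proposition~\ref{l:x31} with Lemma~\ref{l:33}, which is precisely your argument. The extra remarks you add on identifying $\mWWr$ etc.\ as $\projN$-image measures and on simplicity of the discrete determinantal processes are consistent with what the paper states just before the lemma.
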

\begin{proof}
From \pref{l:x31} and \lref{l:33}, we obtain the claim. 
\end{proof}

\begin{lem} \label{l:46}
$ \DSWr $ is isomorphic to a Bernoulli shift.
\end{lem}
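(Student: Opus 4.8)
The plan is to exhibit $ \mWWr $ as a finitely dependent element of $ \PM (2^N) $ and then apply \lref{l:x33}. Writing $ \WW = \{0,1\}^{\Zd \times [N]} = (\{0,1\}^{[N]})^{\Zd} $ and identifying the finite fibre $ \{0,1\}^{[N]} $ with $ [2^N] $, we regard $ \mWWr $ as a probability measure on $ \Omega ^{\Zd} $ with countable $ \Omega = \{0,1\}^{[N]} $. It is $ \TZ $-invariant, since $ \mZZr $ is translation-invariant in the first coordinate (\sref{s:3}) and $ \projN $ is equivariant for the two shifts; hence $ \mWWr \in \PM (2^N) $. It then suffices to verify the finite dependence in \dref{dfn:31}.

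The geometric input is that $ \KKr $ has finite range in the first coordinate. Let $ k_r $ and $ k $ be the functions with $ \K _r (x,y) = k_r (x-y) $ and $ \K (x,y) = k (x-y) $. By \eqref{:21b} and $ \hatk _r = \hatk \ast \hat{w}_r $, the convolution theorem yields $ k_r = k \cdot w_r $, the factor $ w_r $ being recovered from $ \hat{w}_r $ by Fourier inversion and the evenness of $ w_r $. Since $ \mathsf{supp}\, w_r \subset (-r,r)^d $, we get $ \K _r (x,y) = 0 $ whenever $ |x_j - y_j| \geq r $ for some $ j $. As each cell $ P_z = P_0 + z $ is relatively compact, the support condition $ \mathsf{supp}\, \onb _{z,l} \subset P_z $ and \eqref{:31b} then force $ \KKr (z,l;w,m) = 0 $ once $ z $ and $ w $ are far apart; thus there is $ r_0 \in \N $ with $ \KKr (z,l;w,m) = 0 $ whenever $ \dg (z,w) \geq r_0 $.

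Next I would convert this into finite dependence. Fix $ R , S \subset \Zd $ with $ \inf \{ \dg (z,w) ; z \in R , w \in S \} \geq r_0 $ and set $ A = R \times \N $, $ B = S \times \N $. By the previous paragraph $ \KKr $ vanishes on $ A \times B $ and on $ B \times A $, so $ \KKr $ restricted to $ A \cup B $ is block diagonal. For the determinantal process $ \mZZr $ this makes every joint occupation probability factorise, because the determinant of $ \KKr $ over a finite set $ \{ a_i \} \cup \{ b_j \} $ equals the product of the two block determinants; since such occupation probabilities determine the law on the countable set $ \ZN $, the restrictions $ \xi |_A $ and $ \xi |_B $ are independent under $ \mZZr $. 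Finally, each coordinate $ \omega _{z,l}^N = 1_{\{ \xi (\QQN ) \geq 1 \}} $ with $ z \in R $ depends only on $ \xi |_A $, because $ \QQN \subset \{ z \} \times \N $; hence $ \sigma [\pi _R ] $ and $ \sigma [\pi _S ] $ are independent under $ \mWWr $, which is exactly $ r_0 $-dependence. Thus $ \mWWr $ is finitely dependent and \lref{l:x33} gives that $ \DSWr $ is isomorphic to a Bernoulli shift.

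The step I expect to be the main obstacle is this factorisation: upgrading the block-diagonal structure of $ \KKr $ to genuine independence of $ \xi |_A $ and $ \xi |_B $. The determinant computation is immediate, but since $ A $ and $ B $ are infinite one must argue that equality of all occupation probabilities forces independence of the generated $ \sigma $-fields; this is handled by a standard $ \pi $-system argument once one knows that these probabilities determine the law, and the product structure of a determinantal process with a block-diagonal kernel can equally be quoted from the determinantal-process literature.
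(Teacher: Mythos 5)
Your proof is correct and follows essentially the same route as the paper: both establish that $\KKr$ has finite range in the $\Zd$-coordinate (via $k_r = k\cdot w_r$ and the compact support of $w_r$), factor the determinants over block-diagonal index sets to get factorization of the occupation probabilities, upgrade this to independence of the $\sigma$-fields by a $\pi$-$\lambda$ argument, and conclude with \lref{l:x33}. The step you flag as the main obstacle is handled in the paper exactly as you propose.
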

\begin{proof}[Proof] 
We identify $ \WW $ with $ [2^N]^{\Zd}$ and $ \mWWr$ with an element of $ \PM(2^{N})$, respectively. 
We shall prove that $ \mWWr $ is finitely dependent. 
For this it only remains to prove that $ \mZZr$ is finitely dependent 
because $ \DSWr $ is the $ \varpi_N $-factor of $ \DSZr$. 

Let $ \dg $ be the graph distance as before. 
Let $ \rr >0 $ such that for each $ z,w \in \Zd$, 
\begin{align}\notag 
\dg (z,w) \geq \rr \Rightarrow \inf \{ |z_i - w_i| ; i=1,\ldots,d \}\geq r
.\end{align}
For $ P , Q \subset \ZN $, we define a pseudo distance by 
\begin{align}\notag 
\dg(P,Q) =\inf\{ \dg(z,w); (z,l) \in P , (w,m) \in Q \}
.\end{align}
Let $ P , Q \subset \ZN$ be finite sets such that $ \dg(P,Q) \geq \rr $. 
Then 
\begin{align}\label{:37aaa}&
\text{$ \KK _r (z,l;w,m)=0$\quad for $ (z,l) \in P $, $(w,m) \in Q$ }
.\end{align}
For $ P \subset \ZN $, we define a cylinder set by 
\begin{align}&\notag 
\IP = \{ \omega \in \mathsf{Conf}(\ZN) ; \omega (\{(z,l)\})=1 \text{ for all } (z,l) \in P \} 
.\end{align}
By construction, $ \IP \cap \IQ = \IPQ$. 
Therefore 
\begin{align}\notag 
\mZZr ( \IP \cap \IQ ) 
=& 
\mZZr ( 1^{P \cup Q} )
\\\notag 
=& 
\det [ \KK _r ( z,l ; w,m ) ] _{ (z,l) , (w,m) \in P \cup Q }
\\\notag 
=&\notag
\det [ \KK _r ( z,l ; w,m ) ]_{(z,l) , (w,m) \in P } 
\det [ \KK _r ( z,l ; w,m ) ]_{(z,l) , (w,m) \in Q } 
\\ \label{:37b}
=& \mZZr (\IP) \mZZr (\IQ) 
.\end{align}
The third equality follows from \eqref{:37aaa}. 

Let $ R , S \subset \Zd $ such that $ \dg (R\times \N ,S\times \N) \geq r_0$. 
From \eqref{:37b} and the $ \pi$-$ \lambda$ theorem, 
\begin{align}\notag 
\mZZr(A \cap B)= \mZZr(A)\mZZr(B)
\end{align}
for each $ A \in \sigma[\pi_{R\times \N } ]$ and $ B \in \sigma[\pi_{S \times R}]$. 
Hence $ \mWWr $ is $ \rr $-dependent. 

From this and \lref{l:x33}, the claim holds. 
\end{proof}

\begin{lem} \label{l:47}For each $ N $, 
\begin{align}\label{:47a}&
 \lim_{r \rightarrow \infty} \dbar ( \mWW , \mWWr ) = 0
.\end{align}
\end{lem}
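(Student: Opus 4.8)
The plan is to estimate $\dbar(\mWW,\mWWr)$ by routing through the common stochastic upper bound $\mWWro$ furnished by \lref{l:43}, thereby reducing the whole problem to the smallness of an ``excess intensity'' that vanishes as $r\to\infty$. Since $\dbar$ is a metric on $\PM(2^N)$, I will use
\begin{align*}
\dbar(\mWW,\mWWr)\leq \dbar(\mWW,\mWWro)+\dbar(\mWWro,\mWWr),
\end{align*}
and show each term is at most $|P_0|\,\|\hatk_r-\hatk\|_{L^1(\Rd)}$, where $|P_0|$ is the Lebesgue measure of the cell $P_0$ over the origin.

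The first ingredient is analytic. Because $\hatk_r=\hatk\ast\hat w_r$ with $\hat w_r\geq0$, $\|\hat w_r\|_{L^1}=1$, and $\hat w_r$ a Fej\'er-type approximate identity concentrating at the origin as $r\to\infty$, one gets $\lim_{r\to\infty}\|\hatk_r-\hatk\|_{L^1(\Rd)}=0$, hence $\|(\hatk_r\vee\hatk)-(\hatk_r\wedge\hatk)\|_{L^1}=\|\,|\hatk_r-\hatk|\,\|_{L^1}\to0$. The second ingredient is an exact computation of the excess intensity over the origin fiber. For fixed origin $\{\onb_{0,l}\}_{l}$ is an orthonormal basis of $L^2(P_0)$, so Parseval gives $\sum_{l}|\hat\onb_{0,l}(t)|^2=|P_0|$ for every $t$. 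Since $\ok_r-\uk_r$ is the Fourier multiplier with symbol $(\hatk_r\vee\hatk)-(\hatk_r\wedge\hatk)=|\hatk_r-\hatk|$, combining this with \eqref{:31b} yields
\begin{align*}
\sum_{l\geq1}\bigl[\KKo_r(0,l;0,l)-\KKu_r(0,l;0,l)\bigr]=\int_{\Rd}|\hatk_r-\hatk|(t)\,\sum_{l}|\hat\onb_{0,l}(t)|^2\,dt=|P_0|\,\|\hatk_r-\hatk\|_{L^1}.
\end{align*}

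It remains to convert the $\dbar$-distances into this excess intensity. By \eqref{:33e} we have $\KK\leq\KKo_r$, so \lref{l:23} provides a monotone coupling $\cpl$ of $\mZZ$ and $\mZZro$ on $\ZZ$, supported on $\{\zeta^\mu\leq\zeta^{hi}\}$. Pushing $\cpl$ forward by the monotone map $\projN$ gives a monotone coupling of $\mWW$ and $\mWWro$. If the two images differ in the fiber over the origin then, because for $l\in[N-1]$ the coordinate $\omega_{0,l}$ equals $\zeta_{0,l}$ while for $l=N$ it only records whether some point sits at $(0,m)$ with $m\geq N$, the ordered configurations $\zeta^\mu\leq\zeta^{hi}$ must already differ at some $(0,m)$. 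Hence, using $\zeta_{0,m}\in\{0,1\}$ with $\mathsf{E}[\zeta_{0,m}]=\KK(0,m;0,m)$,
\begin{align*}
\dbar(\mWW,\mWWro)\leq\sum_{m\geq1}\bigl[\KKo_r(0,m;0,m)-\KK(0,m;0,m)\bigr]\leq\sum_{m\geq1}\bigl[\KKo_r-\KKu_r\bigr](0,m;0,m)=|P_0|\,\|\hatk_r-\hatk\|_{L^1},
\end{align*}
the middle step using $\KKu_r\leq\KK$ from \eqref{:33e}. The identical argument with $\mZZr$ (via $\KKu_r\leq\KK_r\leq\KKo_r$ from \eqref{:33f}) gives $\dbar(\mWWro,\mWWr)\leq|P_0|\,\|\hatk_r-\hatk\|_{L^1}$. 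The triangle inequality and the first ingredient then yield $\dbar(\mWW,\mWWr)\leq 2|P_0|\,\|\hatk_r-\hatk\|_{L^1}\to0$, uniformly in $N$.

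The main obstacle is exactly this conversion step: $\dbar$ measures a discrepancy at a single site of $\WW$, whose last coordinate $\omega_{0,N}$ collapses the whole tail $\{m\geq N\}$ into one bit, so a naive site-wise intensity estimate downstairs is awkward. Building the coupling upstairs on $\ZZ$ and pushing it down resolves this, since a discrepancy of the collapsed bit is dominated by a genuine discrepancy of some $\zeta_{0,m}$. The clean bridge between the coupling estimate and the analytic smallness is the Parseval identity $\sum_l|\hat\onb_{0,l}(t)|^2=|P_0|$, which turns the summed diagonal difference into $|P_0|\,\|\hatk_r-\hatk\|_{L^1}$.
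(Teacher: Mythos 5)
Your proof is correct, and while its skeleton (triangle inequality through the stochastically dominating processes, monotone couplings from \lref{l:23}/\lref{l:43}, reduction to one-point intensity differences over the origin fiber) matches the paper's, the concluding analytic step is genuinely different and arguably sharper. The paper bounds $\dbar(\mWW,\mWWr)$ by the sum over $l\in[N-1]$ of single-site intensity gaps plus the tail-event gap $\mZZro(\ww(\{0\}\times\N\setminus\NN)\geq 1)-\mZZru(\cdots)$, shows via Cauchy--Schwarz that $\KKo_r-\KKu_r\to 0$ uniformly on compacts, and then invokes weak convergence $\mZZro,\mZZru\to\mZZ$ (Proposition 3.10 of \cite{s-t.jfa}) to kill both pieces --- the tail event in particular being handled only through this weak-convergence appeal. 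You instead sum the diagonal gap over the whole fiber $\{0\}\times\N$ and evaluate it exactly: since $\ok_r-\uk_r$ is the Fourier multiplier with symbol $|\hatk_r-\hatk|$ and $\sum_l|\hat\onb_{0,l}(t)|^2=|P_{0}|$ by Parseval (the $\onb_{0,l}$ being an orthonormal basis of $L^2(P_{0})$), you get the closed-form bound $\dbar(\mWW,\mWWr)\leq 2|P_{0}|\,\|\hatk_r-\hatk\|_{L^1}$. This buys you three things: the tail coordinate $\omega_{0,N}$ (which collapses $\{m\geq N\}$ into one bit) is dominated by a genuine site-wise discrepancy upstairs with no weak-convergence argument needed, the bound is quantitative, and it is uniform in $N$. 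The one ingredient you must supply that the paper also uses implicitly is $\|\hatk_r-\hatk\|_{L^1}\to 0$, which holds because $\hat w_r$ is a Fej\'er-type approximate identity; your justification of this is adequate. Minor points worth stating explicitly if you write this up: the diagonal monotonicity $\KKu_r(0,m;0,m)\leq\KK(0,m;0,m)\leq\KKo_r(0,m;0,m)$ follows from the operator inequalities \eqref{:33e} by testing against basis vectors, and the pushforward of the upstairs monotone coupling under $(\projN,\projN)$ being a monotone coupling downstairs is exactly \pref{l:x31}.
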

\begin{proof}[Proof]
Because $ \dbar$ is a metric on $ \PM(M)$, 
\begin{align} \label{:47a1}&
\dbar(\mWW , \mWWr ) \leq \dbar( \mWWru , \mWW ) + \dbar(\mWWru , \mWWr )
, \\ \label{:47a2}&
\dbar(\mWW , \mWWr ) \leq \dbar(\mWW , \mWWro ) + \dbar( \mWWr ,\mWWro )
.\end{align}
From \lref{l:42} and \lref{l:43}, 
 there exist a monotone coupling $ \mm $ of $ \mWWru $ and $ \mWW $. 
By the definition \eqref{:44w} of $ \dbar$, we deduce 
\begin{align}\notag 
\dbar(\mWWru , \mWW ) \leq& 
\mm \bigl( \bigl\{ (\ww _1 , \ww _2) \ ; \ \ww _1 (\{0\} \times \{l\}) \neq \ww _2 (\{0\} \times \{l\}) \text{ for } ^{\exists}l \in \NN \bigr\} \bigr) 
\\ 
\leq&\notag 
\sum _{l \in \NN } \mm \bigl( \bigl\{ (\ww _1 , \ww _2) \ ; \ \ww _1 (\{0\} \times \{l\}) \neq \ww _2 (\{0\} \times \{l\}) \bigr\} \bigr) 
\\
=&\label{:47b} \sum _{l \in \NN } 
\bigr\{
\mWW (\ww _1 (\{0\} \times \{l\})=1) 
- 
\mWWru (\ww _2 (\{0\} \times \{l\})=1) 
\bigl\}
.\end{align}
The last equation follows from the fact that $ \mm $ is a monotone coupling of $ \mWWru $ and $ \mWW $. 
Because of \lref{l:43}, 
\eqref{:47b} is true for 
$ ( \mWWru , \mWWr ) $, 
$ (\mWW , \mWWro ) $, 
and 
$ (\mWWr , \mWWro ) $. 
From this combined with \eqref{:47a1} and \eqref{:47a2}, we obtain that 
\begin{align} \notag 
\dbar(\mWW , \mWWr ) \leq 
\notag &\sum _{l \in \NN } 
\bigr\{ 
\mWWro (\ww _1 (\{0\} \times \{l\})=1) - 
\mWWru (\ww _2 (\{0\} \times \{l\})=1)
\bigl\}
\\= &\notag 
\sum _{l \in [N-1]} 
\bigr\{ 
\mZZro (\ww _1 (\{0\} \times \{l\})=1)- 
\mZZru (\ww _2 (\{0\} \times \{l\})=1)
\bigl\}
\\&\label{:47c}
+\mZZro (\ww _1 (\{0\} \times \N \backslash \NN ) \geq 1)
-\mZZru (\ww _2 (\{0\} \times \N \backslash \NN ) \geq 1).
\end{align}
The last equation follows from definitions of $\mZZro $ and $\mZZru $. 

For $ (z,l) $ and $ (w,m )$, 
\begin{align} & \label{:47d}
| \KKo _r (z,l;w,m) - \KKu _r (z,l;w,m) | 
\\ \notag 
& =
\Bigl| \int _{\Rd \times \Rd} \{ \Ko _r (x,y) - \Ku _r (x,y) \} \onb _{z,l}(x) \onb _{w,m}(y) dxdy \Bigr|
\\ \notag 
& \leq 
\int _{\Rd \times \Rd} |\Ko _r (x,y) - \Ku _r (x,y)| |\onb _{z,l}(x) \onb _{w,m}(y)|dxdy
\\ \label{:47e}
& =
\int _{\mathsf{supp}\onb _{z,l }\times \mathsf{supp}\onb _{w,m}} |\Ko _r (x,y) - \Ku _r (x,y)| |\onb _{z,l}(x) \onb _{w,m}(y)|dxdy
.\end{align}
Because $ \Ko _r \, , \Ku _r \in L _{\mathrm{loc}} ^2 (\Rd \times \Rd) $ and $\onb _{z,l}$ and $ \onb _{w,m}$ are orthonormal basis on $ L^2 (\Rd)$ with relatively compact supports, the Schwarz inequality implies that 
\begin{align}\label{:47f}&
\eqref{:47e} 
\leq 
\Bigl( 
\int _{\mathsf{supp \onb _{z,l}} \times \mathsf{supp}\onb _{w,m}} 
| \Ko _r (x,y)- \Ku _r (x,y) | ^2 dxdy 
 \Bigr) ^{\frac{1}{2}}
\end{align}
Because $ \hatk_r \rightarrow \hatk \text{ in }L^1 (\Rd)$ as $ r \rightarrow \infty$, 
$ \Ku _r $ and $ \Ko _r$ converge to $ K$ uniformly on any compact set. 
Hence RHS of \eqref{:47f} goes to $ 0$ as $ r \rightarrow \infty$. 
This implies that \eqref{:47d} goes to $ 0$ as $ r \rightarrow \infty$.
Hence for each compact set $ R \subset \Zd \times \N$, 
\begin{align}&\notag 
\max \Bigl\{ | \KKo _r (z,l;w,m) - \KKu _r (z,l;w,m) | \ ; (z,l) , (w,m ) \in R \Bigr\}
\rightarrow
0 \text{ as }r \rightarrow \infty
. \end{align}
From this and Proposition 3.10 in \cite{s-t.jfa}, 
\begin{align}\label{:47g}&
\mZZro \ ,\mZZru \rightarrow \mZZ \text{ weakly as } r \rightarrow \infty
.\end{align}
Finally, \eqref{:47c} and \eqref{:47g} imply \eqref{:47a} .
\end{proof}

\begin{thm} \label{l:4X}
$ \DSZ$ is isomorphic to a Bernoulli shift.
\end{thm}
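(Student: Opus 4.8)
The plan is to derive Theorem~\ref{l:4X} from Proposition~\ref{l:x32} applied to $\nu=\mZZ$ with the approximating sequence $\nu_r=\mZZr$. With these choices one has $\nu_{r,N}=\mZZr\circ\varpi_N^{-1}=\mWWr$ and $\nu_N=\mZZ\circ\varpi_N^{-1}=\mWW$, so that hypothesis~\eqref{l:x32a} of Proposition~\ref{l:x32} is exactly Lemma~\ref{l:46} and hypothesis~\eqref{l:x32b} is exactly Lemma~\ref{l:47}. The two quantitative inputs are therefore already in hand, and the proof reduces to checking the two standing assumptions of Proposition~\ref{l:x32}: that $\mZZ$ is simple and that $(\ZZ,\mZZ,\TZ)$ is ergodic.

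Simplicity is immediate. Since $\mZZ$ is a determinantal point process on the discrete set $\ZN$ with respect to the counting measure, it has no multiple points almost surely, as recalled before Lemma~\ref{l:23}; hence it may be regarded as a $\{0,1\}$-valued field and is simple.

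The only real obstacle is ergodicity of the $\Zd$-action, and I would obtain it from a decay-of-correlations (mixing) argument that runs exactly parallel to the computation of Lemma~\ref{l:46}, with the exact vanishing \eqref{:37aaa} of $\KK_r$ replaced by an \emph{asymptotic} one for $\KK$. Because $\hatk\in L^1(\Rd)$ and $\K$ is given by \eqref{:11z}, the Riemann--Lebesgue lemma gives $\K(x,y)=\K(x-y,0)\to0$ as $|x-y|\to\infty$; since each $\onb_{z,l}$ is supported in the translated, relatively compact cell $P_z=P_0+z$, formula \eqref{:31b} then yields $\KK(z,l;w,m)\to0$ as $\dg(z,w)\to\infty$. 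Consequently, for finite $P,Q\subset\ZN$ and $a\in\Zd$ with $|a|\to\infty$ the off-diagonal block of the Gram matrix $[\KK]_{P\cup(Q+a)}$ tends to $0$, so that
\begin{align*}
\mZZ(\IP\cap\T_{-a}\IQ)=\det[\KK]_{P\cup(Q+a)}\longrightarrow\det[\KK]_P\,\det[\KK]_Q=\mZZ(\IP)\,\mZZ(\IQ),
\end{align*}
where the last identity uses translation invariance of $\KK$ in the first coordinate. The cylinder events $\IP$ form a $\pi$-system (closed under intersection, as $\IP\cap\IQ=\IPQ$) generating $\BC(\ZZ)$, and by inclusion--exclusion the probabilities of \emph{all} cylinder events are polynomials in the $\mZZ(\IP)$; hence the above asymptotic factorization extends to the dense algebra of cylinders and shows that $\mZZ$ is mixing, in particular ergodic. (Alternatively, one may invoke tail-triviality of $\mZZ$ from \cite{o-o.tail} together with the standard fact that a translation-invariant measure with trivial tail $\sigma$-field is ergodic.)

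With simplicity and ergodicity verified, and with \eqref{l:x32a} and \eqref{l:x32b} supplied by Lemmas~\ref{l:46} and~\ref{l:47}, Proposition~\ref{l:x32} applies and yields that $\mZZ$ is isomorphic to a Bernoulli shift, which is the assertion of Theorem~\ref{l:4X}. I expect the ergodicity step to be the delicate part, everything else being bookkeeping; the two points requiring care there are the \emph{uniform} off-diagonal decay of $\KK$ coming from the bounded, translated supports of the $\onb_{z,l}$, and the passage from asymptotic factorization on the ``all-occupied'' cylinders $\IP$ to genuine mixing on the full Borel field.
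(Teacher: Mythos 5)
Your proof is correct and follows essentially the same route as the paper: Theorem~\ref{l:4X} is obtained by applying Proposition~\ref{l:x32} with $\nu=\mZZ$ and $\nu_r=\mZZr$, whose hypotheses \eqref{l:x32a} and \eqref{l:x32b} are supplied exactly by Lemma~\ref{l:46} and Lemma~\ref{l:47}. The only difference is that you explicitly verify the standing assumptions of Proposition~\ref{l:x32} (simplicity, and ergodicity of $(\ZZ,\mZZ,\TZ)$ via Riemann--Lebesgue decay of $\KK$ and factorization of the cylinder determinants), which the paper's one-line proof leaves implicit; that verification is sound and a worthwhile addition.
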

\begin{proof}
From \pref{l:x32}, \lref{l:46} and \lref{l:47}, the claim holds. 
\end{proof}

\section{Proof of \tref{l:11} } \label{s:5}
The purpose of this section is to complete the proof of \tref{l:11}.

We quote a general fact of isomorphism theory: 
\begin{lem}[\cite{Ornstein-74,o-w.amenable}] \label{l:51}
Let $ \DSSZd$ be a factor of $ \DSZd$.
If $ \DSZd$ is isomorphic to a Bernoulli shift, then $ \DSSZd$ is isomorphic to a Bernoulli shift.
\end{lem}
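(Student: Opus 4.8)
The statement is a standard consequence of Ornstein--Weiss isomorphism theory, so I would argue exactly along the lines already used for \pref{l:x32}, reducing the abstract factor to an increasing family of finite-state codings. Write $ \phi : \Omega \to \Omega' $ for the factor map, so that $ \mathbb{P}' = \mathbb{P}\circ \phi^{-1} $ and $ \phi \circ \mathsf{S}_a = \mathsf{S}'_a \circ \phi $ for all $ a \in \Zd $. Since $ \DSZd $ is Bernoulli it is ergodic, and hence so is its factor $ \DSSZd $. Because the systems live on standard probability spaces, $ \mathcal{F}' $ is countably generated modulo $ \mathbb{P}' $-null sets; I would fix an increasing sequence $ \{ \mathcal{Q}_n \}_{n \in \N} $ of finite measurable partitions of $ \Omega' $ whose union generates $ \mathcal{F}' $, and set $ \mathcal{G}_n = \bigvee_{a \in \Zd} (\mathsf{S}'_a)^{-1} \sigma[\mathcal{Q}_n] $, the smallest $ \mathsf{S}'_{\Zd} $-invariant sub-$ \sigma $-field containing $ \mathcal{Q}_n $. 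Then $ \{ \mathcal{G}_n \} $ is increasing and invariant with $ \bigvee_{n} \mathcal{G}_n = \mathcal{F}' $, so it satisfies \eqref{:41a1} for $ \DSSZd $; by \lref{l:41} it suffices to show that each $ \mathcal{G}_n $-factor is isomorphic to a Bernoulli shift, i.e.\ that \eqref{:41a2} holds.

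Next I would identify each $ \mathcal{G}_n $-factor with a finite-state process. Coding $ \Omega' $ by the $ \Zd $-translates of the $ M_n $-atom partition $ \mathcal{Q}_n $ gives an isomorphism of the $ \mathcal{G}_n $-factor with some $ \nu_n \in \PM(M_n) $. Composing this coding with $ \phi $ exhibits $ \nu_n $ simultaneously as a factor of $ \DSZd $, which is Bernoulli by hypothesis; thus $ \nu_n $ is a finite coding of a Bernoulli shift. By \lref{l:45}, $ \nu_n $ is isomorphic to a Bernoulli shift as soon as it is very weak Bernoulli in the sense of \dref{dfn:41}. Feeding this back into \lref{l:41} then yields the claim, so the whole argument comes down to the single assertion that a finite-state coding of a Bernoulli shift is very weak Bernoulli.

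That last assertion is the genuinely hard step and is the heart of Ornstein's theory. To verify it directly one fixes $ \epsilon > 0 $, chooses a large rectangle $ R $, and, after conditioning on the coded configuration over a finite past block $ Q < R $, must couple the conditional law on $ R $ with the unconditional law so that the per-site disagreement rate is below $ \epsilon $ for all but an $ \epsilon $-fraction of conditioning events. The difficulty is that the coding partition depends on infinitely many of the independent underlying coordinates, so conditioning on the factor's past genuinely constrains the field over $ R $; one shows this constraint is negligible in $ \dbar $ by building the coupling on top of the independence of the underlying i.i.d.\ coordinates across the boundary of $ R $, with the residual dependence controlled through martingale convergence of the conditional block distributions. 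This is precisely where the i.i.d.\ structure of the source enters, and it is the main obstacle; the reduction in the first two paragraphs is routine bookkeeping by comparison. Since the lemma is quoted rather than reproved, I would simply invoke the Ornstein--Weiss theorem for $ \Zd $-actions \cite{Ornstein-74, o-w.amenable} for this step.
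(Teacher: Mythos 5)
The paper offers no proof of this lemma at all---it is quoted verbatim from Ornstein \cite{Ornstein-74} and Ornstein--Weiss \cite{o-w.amenable}---and your argument likewise defers the only substantive step (that a finite-state factor of a Bernoulli shift is very weak Bernoulli) to those same references, so the two are essentially the same. Your preliminary reduction via \lref{l:41} and \lref{l:45} to finite codings is correct but not strictly needed, since the cited theorem already applies to abstract factors.
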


For $ n \in \N $, let $ \Pn =\{ \Pnz : z \in \Zd \}$ be a partition of $ \Rd $ such that 
\begin{align}&\notag 
\Pnz = \prod_{i=1}^{d} [\frac{ z_i}{2^{n-1}},\frac{ z_{i}+1 }{2^{n-1}}) \ , z=(z_1,\ldots,z_d) \in \Zd.
\end{align}
Let $ \Pi _{\Pn}: \mathsf{Conf}(\Rd) \rightarrow \mathsf{Conf}(\Zd)$ such that 
\begin{align}&\notag 
\xi \mapsto \sum_{z \in \Zd} \xi(\Pnz )\delta_{z}
. \end{align}
Then $ \Pi _{\Pn } \circ \T _z (\xi) = \T _z \circ \Pi _{\Pn } (\xi)$ 
for each $ z \in \Zd $ and $ \xi \in \XX$. 
Let $ \mYY = \mXX \circ \Pi _{\Pn}^{-1}$. 
Then $ \DSY$ is the $ \Pi _{\Pn}$-factor of $ \DSX$. 
\begin{lem} \label{l:52}
$ \DSY $ is isomorphic to a Bernoulli shift.
\end{lem}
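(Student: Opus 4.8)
The plan is to realize $\DSY$ as a measure-preserving factor of the tree-representation system $\DSZ$, whose Bernoulli property was already obtained in \tref{l:4X}, and then to use that factors of Bernoulli shifts are Bernoulli \lref{l:51}. The point is that although $\DSY$ is presented as the $\projn$-factor of the $\Rd$-action system $\DSX$, the latter has infinite entropy and is not directly accessible; the tractable route is instead through the discrete system $\DSZ$, which is already known to be Bernoulli.

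Concretely, I would run the construction of \sref{s:3} with the partition $P=\Pn$: choosing an orthonormal basis $\ONB$ adapted to $\Pn$ (so that $\mathsf{supp}\,\onb_{z,l}\subset\Pnz$ and \eqref{:31a} holds) makes the induced kernel $\KK$ translation-invariant in the first coordinate, so that $\DSZ=(\ZZ,\mZZ,\TZ)$ is a $\Zd$-action system to which \tref{l:4X} applies and yields that $\DSZ$ is isomorphic to a Bernoulli shift. The bridge between the two systems is the collapsing map $\pi:\ZZ\to\YY$, $\eta\mapsto\sum_{z\in\Zd}\eta(\{z\}\times\N)\delta_z$, of \lref{l:4x2}. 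A one-line computation, $\pi(\T_a\eta)=\sum_{z}\eta(\{z-a\}\times\N)\delta_z=\T_a\pi(\eta)$, shows that $\pi$ is equivariant for the $\Zd$-index shift $\TZ$ carried by both $\ZZ$ and $\YY$.

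The heart of the argument is the identity $\mYY=\mZZ\circ\pi^{-1}$. I would exploit that $\projn$ depends on a configuration only through its cell counts $\xi(\Pnz)$: for a cylinder event $\{\zeta\in\YY:\zeta(\{z\})=n_z,\ z\in F\}$ with $F\subset\Zd$ finite, the definition $\mYY=\mXX\circ\projn^{-1}$ gives the value $\mXX(\{\xi:\xi(\Pnz)=n_z,\ z\in F\})$, while $\mZZ\circ\pi^{-1}$ assigns to the same event the value $\mZZ(\{\eta:\eta(\{z\}\times\N)=n_z,\ z\in F\})$. By \lref{l:4x2} applied with $P=\Pn$ these two numbers coincide. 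Since such cylinder events form a $\pi$-system generating $\BC(\YY)$, the $\pi$--$\lambda$ theorem promotes the agreement to all of $\BC(\YY)$, yielding $\mYY=\mZZ\circ\pi^{-1}$.

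Combining the steps, $\DSY$ is precisely the $\pi$-factor of $\DSZ$; as $\DSZ$ is isomorphic to a Bernoulli shift by \tref{l:4X}, \lref{l:51} gives that $\DSY$ is too. I expect the only genuinely delicate point to be the measure identity $\mYY=\mZZ\circ\pi^{-1}$, namely verifying that the coarse-graining $\projn$ of $\mXX$ and the collapse $\pi$ of its tree representation induce the same law on $\YY$; once \lref{l:4x2} is available this reduces to the $\pi$-system computation above, and the remaining ingredients (equivariance of $\pi$, translation invariance of $\KK$ in the first coordinate, and heredity of the Bernoulli property under factors) are routine.
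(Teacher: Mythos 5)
Your proposal is correct and follows essentially the same route as the paper: realize $\DSY$ as the $\pi$-factor of the tree representation $\DSZ$ built over the partition $\Pn$, invoke \tref{l:4X} for the Bernoulli property of $\DSZ$, and conclude via \lref{l:51}. The extra detail you supply on verifying $\mYY=\mZZ\circ\pi^{-1}$ through cylinder events and the $\pi$--$\lambda$ theorem is exactly what the paper compresses into its citation of \lref{l:4x2}.
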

\begin{proof}

Let $ \ONB _n =\{ \onb_{z,l}^{n} \}_{(z,l) \in \ZN }$ be an orthonormal basis on $ L^2 (\Rd)$ such that $ \onb _{z+w,l}^{n}(x)=\onb _{z,l}^{n} (x-w)$ and $ \mathsf{supp} \, \onb _{z,l}^n = P_{n,z} $. 
Let $ \mZZ $ be the tree representation of $ \mXX $ with respect to $ \ONB _n $. 
Let $ \pi : \ZZ \rightarrow \mathsf{Conf}(\Zd)$ such that 
\begin{align}\notag 
\eta \mapsto \pi(\eta)=\sum_{z \in \Zd} \eta( \{ z \} \times \N )\delta_{z}
.\end{align}
By construction, $ \pi \circ \T _z (\eta)= \T _z \circ \pi (\eta)$ 
for each $ z \in \Zd$ and $ \eta \in \ZZ$. 
From \lref{l:4x2}, 
\begin{align}&\notag 
\mZZ \circ \pi ^{-1} = \mYY
.\end{align} 
Hence $ \DSY $ is the $ \pi$-factor of $ \DSZ $. 
From \tref{l:4X}, $ \DSZ $ is isomorphic to a Bernoulli shift.
From this and \lref{l:51}, the claim holds.
\end{proof}

\begin{lem} \label{l:53}
$ \DSX$ is isomorphic to a Bernoulli shift. 
\end{lem}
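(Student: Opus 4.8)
The plan is to present $\DSX$ as the increasing limit of its lattice factors and to invoke the Ornstein--Weiss criterion \lref{l:41}. For each $ n \in \N $ let $ \mathcal{F}_n $ denote the completion of $ \sigma[\projn] $, the sub-$ \sigma $-field generated by the dyadic cell counts $ \xi \mapsto \xi(\Pnz) $, $ z \in \Zd $. The partitions $ \{\Pn\}_n $ are nested, each cell of $ P_{n-1} $ being a finite disjoint union of cells of $ \Pn $ for $ n \geq 2 $, so $ \xi \mapsto \xi(P_{n-1,z}) $ is $ \mathcal{F}_n $-measurable and $ \{\mathcal{F}_n\}_{n \in \N} $ is increasing. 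Moreover an integer translation $ \T_w $ $ (w \in \Zd) $ maps the cell $ \Pnz $ onto $ P_{n,z+2^{n-1}w} $ and hence permutes the cells of $ \Pn $; thus each $ \mathcal{F}_n $ is $ \TZ $-invariant, as \lref{l:41} requires.

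Next I would check the two displayed hypotheses. For \eqref{:41a1}, the Borel $ \sigma $-field of $ \XX $ for the vague topology is generated by the count maps $ \xi \mapsto \xi(B) $ as $ B $ ranges over a ring of relatively compact Borel sets, and the half-open dyadic boxes $ \{\Pnz\}_{n,z} $ generate such a ring; since $ \mXX $ is a simple, locally finite point process, the counts in all dyadic boxes determine the configuration. Hence $ \bigcup_n \sigma[\projn] $ generates the Borel $ \sigma $-field of $ \XX $, so $ \bigvee_n \mathcal{F}_n = \mathcal{F} $. For \eqref{:41a2}, the $ \projn $-factor of $ \DSX $ lives on $ \YY $ with law $ \mYY = \mXX \circ \projn^{-1} $, and its induced action is $ \{\projn \circ \T_w \circ \projn^{-1} : w \in \Zd\} = \{\T_{2^{n-1}w} : w \in \Zd\} $, namely the restriction of the unit-shift action on $ \YY $ to the finite-index subgroup $ 2^{n-1}\Zd $. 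By \lref{l:52} the unit-shift system $ \DSY $ is isomorphic to a Bernoulli shift, and the restriction of a $ \Zd $-Bernoulli system to a finite-index subgroup (itself isomorphic to $ \Zd $) is again Bernoulli \cite{o-w.amenable}; therefore every $ \mathcal{F}_n $-factor is isomorphic to a Bernoulli shift.

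Finally $ \DSX $ must be shown ergodic, the standing hypothesis of \lref{l:41}. I would read this off the filtration: for a $ \TZ $-invariant event $ A $ the conditional expectation $ \mathsf{E}^{\mXX}[1_A \mid \mathcal{F}_n] $ is an invariant function on the $ \mathcal{F}_n $-factor, which is ergodic because it is Bernoulli, so $ \mathsf{E}^{\mXX}[1_A \mid \mathcal{F}_n] = \mXX(A) $; letting $ n \to \infty $ and using \eqref{:41a1} gives $ 1_A = \mXX(A) $ almost surely, whence $ \mXX(A) \in \{0,1\} $. (Ergodicity also follows from the mixing of translation-invariant determinantal point processes.) With ergodicity, \eqref{:41a1} and \eqref{:41a2} in hand, \lref{l:41} yields that $ \DSX $ is isomorphic to a Bernoulli shift.

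The essential content of the lemma lies in \lref{l:52}, so the only genuine difficulty here is the bookkeeping in \eqref{:41a2}: the fixed unit-translation action on $ \XX $, read through the scale-$ 2^{n-1} $ partition $ \Pn $, projects to the subgroup action $ \T_{2^{n-1}\Zd} $ rather than to the full unit-shift action treated in \lref{l:52}, which is why the stability of the Bernoulli property under finite-index restriction must be invoked. The remaining points---nestedness, invariance, the generation \eqref{:41a1}, and ergodicity---are routine once the configuration-space $ \sigma $-field is described through dyadic cell counts.
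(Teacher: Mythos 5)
Your proof is correct and follows essentially the same route as the paper: the increasing, generating filtration $\{\sigma[\projn]\}_{n\in\N}$ of dyadic cell counts, combined with \lref{l:52} and the Ornstein--Weiss criterion \lref{l:41}. The two places where you go beyond the paper's own (very terse) proof are both worth keeping. First, your bookkeeping for \eqref{:41a2} is not pedantry: since $\Pnz$ has side $2^{-(n-1)}$, the unit translation $\T_w$ on $\XX$ pushes forward under $\projn$ to $\T_{2^{n-1}w}$ on $\mathsf{Conf}(\Zd)$ --- the paper's displayed equivariance $\Pi_{\Pn}\circ\T_z=\T_z\circ\Pi_{\Pn}$ holds only for $n=1$ --- so the $\sigma[\projn]$-factor of $\DSX$ is the restriction of $\DSY$ to the finite-index subgroup $2^{n-1}\Zd$, and the stability of Bernoullicity under finite-index restriction genuinely has to be invoked (or, equivalently, one rescales and applies \lref{l:52} at each scale); the paper silently elides this. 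Second, \lref{l:41} assumes ergodicity as a hypothesis, which the paper does not verify here; your martingale argument deducing it from ergodicity of the Bernoulli factors together with \eqref{:41a1} closes that loop. In short: same decomposition and same key lemmas, with two real gaps in the paper's one-paragraph write-up repaired.
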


\begin{proof}[Proof]
By construction, the sequence of partitions $ \{ \Pn : n \in \N \} $ is increasingly finer and separates the points of $ \Rd$.
From this, we obtain that 
$\{ \sigma [ \projn ] \}_{n \in \N }$ is increasing 
and 
$ \bigvee _{n \in \N} \sigma [ \projn ] $ separates the points of $ \XX$. 
Putting this, \lref{l:52} and \lref{l:41} together implies the claim. 
\end{proof}

We quote Theorem10 of III.6. in \cite{o-w.amenable}: 
\begin{lem}[\cite{o-w.amenable}]\label{l:54}
For an $\Rd $-action system $ \DSRd$, let $ \mathsf{S}_{\Zd}=\{ \mathsf{S}_{g}: g \in \Zd \}$ be the limitation on $ \Zd$-action of $ \mathsf{S}_{\Rd}$.
If $ \DSZd$ is isomorphic to a Bernoulli shift with infinite entropy, then $ \DSRd$ is isomorphic to a homogeneous Poisson point process. 
\end{lem}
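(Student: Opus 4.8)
The plan is to recognize this as a direct application of the Ornstein--Weiss isomorphism theory for actions of the amenable group $\Rd$, together with its cocompact lattice $\Zd$. I would assemble three ingredients: (i) a homogeneous Poisson point process on $\Rd$ is a Bernoulli $\Rd$-flow of infinite entropy; (ii) an $\Rd$-action whose restriction $\DSZd$ to the lattice is Bernoulli is itself a Bernoulli $\Rd$-flow; and (iii) Bernoulli $\Rd$-flows are classified up to isomorphism by their entropy, so any two of infinite entropy are isomorphic. The conclusion then follows by matching $\DSRd$ with the Poisson flow through these three facts.

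First I would verify (i). Taking the unit cube $[0,1)^d$ as a fundamental domain for the action of $\Zd$ on $\Rd$ by translation, the defining independence property of a homogeneous Poisson point process shows that the restrictions of a configuration to the disjoint translates $z+[0,1)^d$, $z\in\Zd$, are independent, and translation invariance makes them identically distributed. Hence the $\Zd$-restriction of the Poisson flow is literally a product (Bernoulli) shift over the state space $\mathsf{Conf}([0,1)^d)$, and its per-site entropy is infinite because the number of points in a cube is an unbounded integer-valued random variable carrying continuum position data. Consequently the full $\Rd$-action attached to the Poisson process is a Bernoulli flow of infinite entropy.

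Next, for (ii), I would invoke the amenable-group analogue of Ornstein's time-one-map theorem: for an $\Rd$-flow the full action is Bernoulli exactly when its restriction to the cocompact lattice $\Zd$ is Bernoulli. Under the hypothesis that $\DSZd$ is isomorphic to a Bernoulli shift, this upgrades to the assertion that $\DSRd$ is a Bernoulli $\Rd$-flow; moreover its entropy is infinite, since the lattice restriction already has infinite entropy by assumption. Combining this with (i), both $\DSRd$ and the Poisson flow are Bernoulli $\Rd$-flows of infinite entropy, and the Ornstein--Weiss isomorphism theorem for amenable groups (with the matching value $\infty=\infty$) forces them to be isomorphic, which is precisely the claim.

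The main obstacle is step (ii): transferring the Bernoulli property from the lattice action to the full $\Rd$-action. This is the genuinely deep input, where the Ornstein--Weiss machinery---the finitely-determined and very weak Bernoulli characterizations adapted to amenable groups, together with the Rokhlin-type tower technology for $\Rd$---does the real work. Since this is exactly the content of Theorem 10 of III.6 in \cite{o-w.amenable}, I would not reprove it from scratch but cite it, having reduced the remaining content to the two routine verifications in (i) and (iii).
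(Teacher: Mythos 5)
The paper gives no proof of this lemma at all---it is quoted verbatim as Theorem~10 of III.6 in \cite{o-w.amenable}---so your proposal is consistent with the paper's purely citational treatment, and your three-step reconstruction (the Poisson flow is Bernoulli of infinite entropy; lattice-Bernoulli upgrades to flow-Bernoulli; Bernoulli $\Rd$-flows of equal entropy are isomorphic by Ornstein--Weiss) is a correct account of how the cited result is assembled, with the genuinely deep input correctly isolated in step (ii) and deferred to the same reference. No gaps.
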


We are now ready to complete the proof of \tref{l:11}.

\begin{proof}[{Proof of \tref{l:11} }]
From \lref{l:53}, $ \DSX$ is isomorphic to a Bernoulli shift. 
Because the restriction of $ \mXX $ on $ [0,1)^d$ is a non-atomic probability measure, the entropy of $ \DSX $ is infinite. 
Putting this and \lref{l:54} together implies the claim. 
\end{proof}


\noindent 
{\bf Acknowledgment: }\\
This work is supported by JSPS KAKENHI Grant Number 18J20465.



\begin{thebibliography}{Abc}
\DN\ttl[1]{\textrm{#1},} 














\bibitem{Bu-S.17} Bufetov, A. I., and Shirai, T. \ttl{Quasi-symmetries and rigidity for determinantal point processes associated with de Branges spaces} Proc. Japan Acad. Ser. A Math. Sci. Volume 93, Number 1 (2017), 1--5.


\bibitem{Liggett-85} Liggett, T. M. : \ttl{Interacting Particle Systems}
 Classics in Mathematics, Springer. (1985).



\bibitem{RL.03} Lyons, R. : \ttl{Determinantal probability measures} Publ.\! Math.\! Inst.\! Hautes \'{E}tudes Sci.\! {\bf 98} (2003), 167--212. 


\bibitem{RL-St.03} Lyons, R., and Steif, J. E. : \ttl{Stationary determinantal processes: phase multiplicity, bernoullicity, and domination} Duke. Math. J. {\bf 120 } (2003), 515--575.


\bibitem{o-o.tail} Osada, H., and Osada, S. : \ttl{Discrete approximations of determinantal point processes on continuous spaces: tree representations and the tail triviality} J. Stst. Phys. {\bf 170} (2018), 421--435. 

\bibitem{Orns.1970a} Ornstein, D. S. : \ttl{Bernoulli shifts with the same entropy are isomorphic} Advances in Math., {\bf 4} (1970), 337--352.

\bibitem{Orns.1970b} Ornstein, D. S. : \ttl{Two Bernoulli shifts with infinite entropy are isomorphic} Advances in Math. {\bf 5} (1970), 339--348. 

\bibitem{Ornstein-74} Ornstein, D. S. : \ttl{Ergodic Theory, Randomness, and Dynamical Systems} Yale Univ. Press, New Heaven, Conn. (1974).



\bibitem{o-w.amenable} Ornstein, D. S., and Weiss, B. : \ttl{Entropy and isomorphism theorems for actions of amenable groups} J. Anal. Math. {\bf 48} (1987), 1--144.


\bibitem{sosh} Soshnikov, A. : \ttl{Determinantal random point fields} Russian Math. Surveys {\bf 55} (2000), 923-975. 

\bibitem{s-t.jfa} Shirai, T., and Takahashi, Y. : \ttl{Random point fields associated with certain Fredholm determinants I: Fermion, Poisson and Boson processes} J. Funct. Anal. {\bf 205} (2003), 414--463. 

\bibitem{s-t.aop} Shirai, T., and Takahashi, Y. : \ttl{Random point fields associated with certain Fredholm determinants II: fermion shifts and their ergodic properties} Ann. Prob. {\bf 31} (2003), 1533--1564. 

\bibitem{steif.91} Steif, J. E. : \ttl{Space-time Bernoullicity of the lower and upper stationary processes for attractive spin system} Ann. Prob. {\bf 19} (1991),  No.2, 609--635.  



\end{thebibliography}
\end{document}